\documentclass[11pt]{article}

\usepackage[T1]{fontenc}
\usepackage{lmodern}
\usepackage{microtype}
\usepackage{amsmath,amssymb,amsthm,mathtools}
\usepackage{needspace, todonotes, comment}
\usepackage[hidelinks]{hyperref}
\newtheorem{theorem}{Theorem}[section]
\newtheorem{proposition}[theorem]{Proposition}
\newtheorem{definition}[theorem]{Definition}
\newtheorem{corollary}[theorem]{Corollary}
\newtheorem{lemma}[theorem]{Lemma}
\newtheorem{conjecture}[theorem]{Conjecture}

\newcommand{\F}{\mathbb F}
\newcommand{\dist}{d_{\mathrm H}}

\title{Triangle-Free Sets in Exact-Distance Hamming Graphs}
\author{Bence Csajb\'ok \thanks{Department of Computer Science, 
	ELTE E\"otv\"os Lor\'and University, 
	H-1117 Budapest, P\'azm\'any P.\ stny.\ 1/C, Hungary. This paper was supported by the János Bolyai Research Scholarship of the Hungarian Academy of Sciences and partially by the National Research, Development and Innovation Fund – grant numbers ADVANCED 153080, EXCELLENCE 151504 and SNN 152582. {\texttt{bence.csajbok@ttk.elte.hu}}} \and Padmini Mukkamala\thanks{BME Budapest Institute of Technology and Economics, Budapest, Hungary. Email: {\texttt{padmini.mvs@gmail.com}}} \and Zolt\'an Lóránt Nagy\thanks{ Department of Computer Science, 
	ELTE  E\"otv\"os Lor\'and University, Budapest, Hungary. The author is supported by  the J\'anos Bolyai Research Grant of the Hungarian Academy of Sciences and  partially by the NRDI EXCELLENCE grant, no.  151504;  	E-mail: {\tt zoltan.lorant.nagy@ttk.elte.hu.}}}
\date{}

\begin{document}
\maketitle
\begin{abstract}
Let $T_q(n,r)$ be the largest size of a vertex set inducing a
triangle-free subgraph of the graph on $[q]^n$ whose edges join
words at Hamming distance $r$. We study this quantity for fixed
alphabet size and fixed distance, with particular emphasis on
the ternary cube. Alphabet restriction transfers the binary
fixed-distance theorem of Balogh, Chen, and Li to give
$T_q(n,2h)=O_h(q^n/n^h)$. We extend the known distance-one formula and distance-two order
to arbitrary integer alphabet sizes, obtaining
$T_q(n,1)=2q^{n-1}$ and $T_q(n,2)=\Theta_q(q^n/n)$,
and establish $T_3(n,4)=\Theta(3^n/n^2)$.
For general fixed distance, we prove
\[
 T_q(n,r)=\Omega_{q,r}\!\left(
 \frac{q^n\sqrt{\log n}}{n^{(r+\lfloor r/2\rfloor)/2}}
 \right)\qquad(q\ge3),
\]
while we show that
$T_q(n,r)=\Omega_{q,r}(q^n/n^{\lfloor(q-1)r/q\rfloor})$ holds
for prime-power $q$ and $r\ge2$ via the extended BCH parity-check constructions.
At distance three, we construct coordinate weights avoiding
bounded-coefficient three-term relations and obtain
$$T_q(n,3)\ge q^n/[n\exp(O_q(\sqrt{\log n}))].$$
Our main ternary construction combines cap partitions, multicolor
triangle Ramsey colorings, and a quadratic map to obtain
\[
 T_3(n,3)=\Omega\!\left(
 \frac{\log\log n}{\log n}\,3^n\right).
\]
A coordinate-line counting argument gives the complementary upper
bound $T_3(n,3)\le(5/27+O(n^{-1}))3^n$, together with an explicit
bound for every fixed $q\ge3$. Whether the ternary density tends
to zero remains open.

\smallskip
\noindent\textit{Keywords:} Hamming graph, exact distance,
triangle-free set, forbidden distance, cap set, Ramsey theory.
\end{abstract}

\section{Introduction and statement of results}

Write $[q]=\{0,1,\ldots,q-1\}$. The $q$-ary Hamming graph
$H_q(n)$ has vertex set $[q]^n$, and two vertices $x,y$  are connected if they are at Hamming distance $d_H(x,y)=1$, i.e., the vectors of the corresponding vertices differ in exactly one coordinate.  Note that $H_q(n)$ is  the Cartesian product of $n$ complete graphs $K_q$. A natural generalisation of the Hamming graph is the \emph{ $r$-distance Hamming graph $H_q(n,r)$}, which has vertex set $[q]^n$, and
\[
 xy\in E(H_q(n,r))
 \quad\Longleftrightarrow\quad
 \dist(x,y)=r.
\]

Our aim is to study the maximum size   of a triangle-free set of vertices in the $r$-distance graph of the Hamming graph. 

\begin{definition}[Maximum triangle-free set of vertices in $H_q(n,r)$]
    \[
 T_q(n,r)=\max\bigl\{|A|:A\subseteq[q]^n
       \text{ and }H_q(n,r)[A]\text{ is triangle-free}\bigr\}.
\]
\end{definition} 

This definition was introduced by Mukkamala and Ravi \cite{mukkamala-ravi}, who were mainly interested in the binary case $q=2$.

  In the terminology of extremal graph theory, it can also be expressed as the
vertex Tur\'an number
\[
       T_q(n,r)=\operatorname{ex}_v(K_3,H_q(n,r)),
\]
namely the largest number of vertices inducing no copy of $K_3$ in the
fixed host graph $H_q(n,r)$.  Vertex Tur\'an problems for the Boolean cube
were developed systematically by Johnson and Talbot
\cite{johnson-talbot}, and related questions for the ternary cube were
considered by Li and Hao \cite{li-hao}.  The forbidden configurations in
those works are embedded subcubes or prescribed small configurations,
rather than equilateral triples in an exact-distance graph, so their
results do not apply directly here.

There is an equivalent hypergraph formulation.  Let
$\mathcal H_{q,n,r}$ be the $3$-uniform hypergraph on $[q]^n$ whose
hyperedges are the triples whose three pairwise Hamming distances are
equal to $r$.  Then
\[
       T_q(n,r)=\alpha(\mathcal H_{q,n,r}).
\]
Thus the problem may also be viewed as an independence problem in a
highly structured $3$-uniform hypergraph, as well as a problem in coding theory.

Exact-distance graphs are closely related to graph powers,
with adjacency prescribed by equality of distances rather
than by an upper bound on distance. Their
structure under Cartesian and other standard products, including exact-
distance graphs of hypercubes, was investigated by Bre\v{s}ar, Gastineau,
Klav\v{z}ar, and Togni \cite{bresar-et-al}.  In the Boolean cube, forbidding
all pairs at one prescribed distance is precisely the classical
one-forbidden-distance code problem.  Its development includes the
subset-sum constructions of Cohen and Z\'emor \cite{cohen-zemor} and the
systematic forbidden-distance bounds of Bassalygo, Cohen, and Z\'emor
\cite{bassalygo-cohen-zemor}.  The problem considered here is a genuinely
higher-order relaxation: prescribed-distance pairs may occur abundantly,
but the graph that they span is required to have clique number at most two.

The binary version arose explicitly from the recursive theta-body method of
Castro-Silva, de Oliveira Filho, Slot, and Vallentin
\cite{castrosilva}.  Mukkamala and Ravi developed direct
combinatorial bounds, proving in particular a uniform estimate
$T_2(n,r)=O(r2^n/(n+1))$ for even $r\le n/2$, as well as exponentially
small density when $r$ is a fixed positive fraction of $n$, with $r/n< \frac 23$ and $r$ is even
\cite{mukkamala-ravi}.  For fixed distance, the decisive result is due to
Balogh,  Chen, and Li \cite{balogh-chen-li}.  For a graph $G$ and an integer $s\ge2$, write
\[
   \alpha_s(G)
   =\max\bigl\{|A|:A\subseteq V(G)
                    \text{ and }G[A]\text{ contains no }K_s\bigr\}.
\]
Thus $\alpha_2(G)=\alpha(G)$ and
$T_q(n,r)=\alpha_3(H_q(n,r))$.
In this notation, Balogh, Chen, and Li~\cite{balogh-chen-li}
proved that, for every fixed $s\ge2$ and $h\ge1$,
\begin{equation}\label{eq:BCL}
   \alpha_s(H_2(n,2h))
   =\Theta_{h,s}\!\left(\frac{2^n}{n^h}\right).
\end{equation}
Alphabet restriction and the colorings developed below yield
nonbinary extensions of this statement; see
Corollary~\ref{cor:fixed-clique-even-distances}.
Their proof brings together the  theory of codes with one forbidden
distance \cite{bassalygo-cohen-zemor} and extremal estimates for
sunflower-free set systems.  Since $H_2(n,r)$ is bipartite when $r$ is odd,
the binary $r$-distance problem is therefore complete:
\[
 T_2(n,r)=
 \begin{cases}
 2^n, & r\text{ odd},\\[2mm]
 \Theta_r(2^n/n^{r/2}), & r\text{ even and fixed}.
 \end{cases}
\]

The passage from two symbols to three yields new phenomena.  A binary
equilateral Hamming triangle can use only coordinates in which exactly one
of its three vertices differs from the other two.  Over an alphabet of size
at least three, a coordinate may instead contain three distinct symbols.
This second local configuration creates triangles at every distance and,
for $q=3$, couples the problem to affine lines, cap sets, and zero-sum
Ramsey colorings.  It is precisely here that the ternary theory separates
from its binary precursor.

To highlight the distinction, suppose that
$x,y,z\in[q]^n$ are pairwise at distance $r$.  Let $a_x,a_y,a_z$ count the
active coordinates at which $x,y,z$, respectively, are the unique exceptional
symbols, and let $d$ count the coordinates at which all three symbols are
different.  Comparing the three pairwise distances gives
\[
 a_x=a_y=a_z=:a,
 \qquad 2a+d=r.
\]
This gives rise to the \textit{possible local types}, indexed by
$0\le a\le\lfloor r/2\rfloor$, with active support of size $r+a$.
For $q=2$ only \textit{the type} $a=r/2$ is available; for $q\ge3$ every type whose
support size satisfies $r+a\le n$ is realizable. This classification
organizes both the general estimates and the constructions below.

Our first result is  a universal upper bound for the $q$-ary case.

\begin{proposition}[Binary-subcube bound]\label{prop:binary-transfer}
For every fixed $h\ge1$ and every $q\ge2$,
\[
       T_q(n,2h)=O_h\!\left(\frac{q^n}{n^h}\right).
\]
\end{proposition}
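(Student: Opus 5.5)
The plan is an averaging argument over binary subcubes, which transfers the Balogh--Chen--Li bound \eqref{eq:BCL} directly. For each pair of symbols $\{u_i,v_i\}\subseteq[q]$ with $u_i\neq v_i$, one chosen independently in every coordinate $i\in[n]$, consider the ``binary subcube''
\[
 B=\prod_{i=1}^n\{u_i,v_i\}\subseteq[q]^n .
\]
The map $\phi_B\colon\{0,1\}^n\to B$ sending $\varepsilon\mapsto(w_i)$, where $w_i=u_i$ if $\varepsilon_i=0$ and $w_i=v_i$ if $\varepsilon_i=1$, is a bijection. It preserves Hamming distance exactly, because two words of $B$ differ in coordinate $i$ if and only if their preimages do. Hence $H_q(n,2h)[B]\cong H_2(n,2h)$. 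If $A\subseteq[q]^n$ induces a triangle-free subgraph of $H_q(n,2h)$, then $\phi_B^{-1}(A\cap B)$ induces a triangle-free subgraph of $H_2(n,2h)$. By \eqref{eq:BCL} with $s=3$, this gives $|A\cap B|\le C_h2^n/n^h$ for a constant $C_h$ depending only on $h$, uniformly in $B$ and in $q$.

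Next comes the double count. Let $B$ range uniformly over all $\binom q2^n$ such subcubes, choosing the unordered pair $\{u_i,v_i\}$ uniformly and independently in each coordinate. A fixed word $x\in[q]^n$ lies in $B$ exactly when $x_i\in\{u_i,v_i\}$ for every $i$. For each coordinate this happens with probability $(q-1)/\binom q2=2/q$, so
\[
 \Pr[x\in B]=(2/q)^n .
\]
Therefore
\[
 \mathbb E|A\cap B|=|A|(2/q)^n\le C_h\frac{2^n}{n^h},
\]
and hence $|A|\le C_hq^n/n^h$. This is the claimed bound $T_q(n,2h)=O_h(q^n/n^h)$, with an implied constant independent of $q$.

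There is essentially no obstacle once \eqref{eq:BCL} is granted. The only point to check is that the embedding is an isometry, so that \emph{exact}-distance triangles correspond exactly, and not merely up to a bound on distance. The bound \eqref{eq:BCL} is stated for fixed $h$, with $n$ large. For $n$ below a threshold depending on $h$, the trivial bound $|A|\le q^n$ is absorbed into the constant. In the range $2h>n$ the graph has no edges at all, and that case is likewise absorbed.
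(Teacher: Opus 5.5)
Your proposal is correct and is essentially the paper's argument: the paper proves Lemma~\ref{lem:alphabet-restriction} by intersecting $A$ with a random coordinatewise $p$-element sub-alphabet product, which is an isometric copy of $[p]^n$ containing each word with probability $(p/q)^n$, and then takes $p=2$ and applies \eqref{eq:BCL}. Your check that the embedding preserves exact Hamming distance, and your handling of small $n$ through the constant, are the right details to verify.
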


At distance two, this upper bound is sharp for every fixed alphabet.
At distance one, we have an exact formula. These give extensions of the work of Mukkamala and Ravi
\cite[Lemmas~4 and~5]{mukkamala-ravi}.

\begin{theorem}[Distances one and two]\label{thm:r1r2}
For every fixed $q\ge2$,
\[
 T_q(n,1)=2q^{n-1}
 \qquad\text{and}\qquad
 T_q(n,2)=\Theta_q\!\left(\frac{q^n}{n}\right).
\]
The lower-bound construction for $r=2$ is an independent set in
$H_q(n,2)$.
\end{theorem}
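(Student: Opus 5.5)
For $T_q(n,1)$, I would work line by line.
\emph{Upper bound.} Partition $[q]^n$ into the $q^{n-1}$ coordinate lines in direction $1$, each inducing a copy of $K_q$ in $H_q(n,1)$. A triangle-free set meets each line in at most $2$ points, so $|A|\le 2q^{n-1}$.
\emph{Lower bound.} For $q\ge 3$, let $A$ consist of the words whose coordinate sum modulo $q$ lies in $\{0,1\}$. This set has size $2q^{n-1}$. Any triangle in $H_q(n,1)$ lies on a single coordinate line, because three words pairwise at distance one must all differ in the same coordinate. Along such a line the coordinate sum runs through every residue exactly once, so $A$ meets each line in exactly two points. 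Hence $A$ contains no triangle. For $q=2$ the graph is bipartite and $T_2(n,1)=2^n=2q^{n-1}$.

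For $T_q(n,2)$ the upper bound $O_q(q^n/n)$ is exactly Proposition~\ref{prop:binary-transfer} with $h=1$, so only the lower bound needs an argument. The claim says the construction is an independent set in $H_q(n,2)$, so I aim for a code avoiding distance exactly $2$ with density $\Omega_q(1/n)$.

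\emph{First attempt: distance at least $3$.} A code of minimum distance at least $3$ works. When $q$ is a prime power, one would take a Hamming code of length $m=(q^k-1)/(q-1)$, where $m\le n<qm$, and append free coordinates. This has density $q^{-k}\ge 1/(q(q-1)n)$.

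\emph{General $q$: a weighted-sum code.} For arbitrary $q$ I would avoid field structure and use a Varshamov–Tenengolts-type construction. Let $N$ be an integer. Set
\[
A_c=\Bigl\{x:\sum_{i=1}^n i\,x_i\equiv c \pmod N\Bigr\},
\]
with coordinates viewed as integers in $[q]$.
\begin{itemize}
\item Two words at distance exactly $2$, differing in positions $i<j$ by nonzero $\delta_i,\delta_j\in\{-(q-1),\dots,q-1\}$, lie in the same class iff $i\delta_i+j\delta_j\equiv 0\pmod N$.
\item Choose $N>2(q-1)n$, so the relation must hold over the integers: $i\delta_i=-j\delta_j$.
\item Such relations can occur, for example $2\cdot 1=1\cdot 2$ when $q\ge3$. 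Plain index weights therefore fail, and this is the step I expect to be the main obstacle.
\end{itemize}

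\emph{Fixing the weights.} The cure is to use weights $w_i$ such that $w_i\delta\ne \pm w_j\delta'$ for all distinct $i,j$ and all $|\delta|,|\delta'|\le q-1$, while keeping the weights of size $O_q(n)$. A clean choice is to take the $w_i$ to be distinct primes greater than $q$. Then $w_i\delta=\pm w_j\delta'$ would force $w_i\mid\delta'$, which is impossible since $0<|\delta'|<q<w_i$. By the prime number theorem these primes are $O(n\log n)$, which would cost a logarithmic factor.

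To avoid that loss, work modulo a prime $p$ with $n\le p=O(n)$. Take $w_i=i$ for $i\in[p-1]$ and require
\[
i\delta_i+j\delta_j\not\equiv 0\pmod p .
\]
This says that $i/j\not\equiv -\delta_j/\delta_i$. The set of bad ratios has size at most $(2q-2)^2$, a constant. So I can greedily choose $n'\ge n/C_q$ indices $i_1,\dots,i_{n'}$ from $[p-1]$ with no pairwise ratio in the bad set. Give these coordinates weights $i_1,\dots,i_{n'}$, and assign weight $0$ to the remaining coordinates except that each of them must also be protected.

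This last requirement is the remaining snag. Coordinates with weight $0$ are unprotected, so two words differing in two such coordinates would both lie in the same class. Instead, I would extend the greedy choice to all $n$ coordinates by working modulo a prime $p\ge C_q n$. A graph-avoidance greedy argument picks $n$ residues with pairwise ratios outside a bad set of size $O_q(1)$, since each chosen residue excludes only $O_q(1)$ others.

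\emph{Conclusion.} The classes $A_c$ then partition $[q]^n$ into $p=O_q(n)$ independent sets of $H_q(n,2)$. The largest class has size $\Omega_q(q^n/n)$, which gives $T_q(n,2)=\Theta_q(q^n/n)$.
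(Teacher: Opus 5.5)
Your final argument is correct and is essentially the paper's proof. For $r=1$ you use the same coordinate-line partition and the set $\{x:\sum_i x_i\in\{0,1\}\}$, and you usefully make explicit that every triangle of $H_q(n,1)$ lies on one coordinate line. For $r=2$ you use the binary-subcube upper bound together with the checksum $\sum_i w_ix_i \bmod p$ for a prime $p=\Theta_q(n)$ (Bertrand's postulate gives one with $C_qn< p\le 2C_qn$), with the weights chosen greedily to avoid the $O_q(1)$ bad ratios $-\delta_j/\delta_i$. The paper phrases that last step as taking an independent set in a bounded-degree graph on $\mathbb{F}_p^\times$.

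One caution about your discarded aside: appending free coordinates to a Hamming code of length $m<n$ does not give an independent set. Two words that agree on the Hamming part and differ in two free coordinates are at distance exactly $2$. You would instead have to shorten a Hamming code of length $m\ge n$, or equivalently use $n$ pairwise linearly independent columns in $\mathbb{F}_q^k$. This aside plays no role in your final proof.
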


At Combinatorics 2026, A.~Ravi asked whether
\[
 T_q(n,r)=\Theta_{q,r}\!\left(
 \frac{q^n}{n^{\lfloor r/2\rfloor}}\right)
\]
holds for every fixed $q\ge3$ and $r\ge 1$.
Theorem~\ref{thm:r1r2} confirms this at distances one and two,
whereas Theorem~\ref{thm:ternary-r3} disproves it already for
$q=r=3$. The fixed even-distance question remains open in general
and motivates Conjecture~\ref{conj:even}.

At distance three we use two different constructions.
For an arbitrary fixed alphabet, we construct coordinate weights
avoiding all three-term relations with nonzero coefficients in
$\{\pm1,\ldots,\pm(q-1)\}$. Their weighted sum gives a proper
coloring of $H_q(n,3)$ and hence a large independent set.
For the ternary alphabet, a quadratic construction based on
zero-sum Ramsey labelings yields substantially larger
triangle-free sets.

\begin{theorem}[Avoiding distance $3$]\label{thm:checksum}
For every fixed integer $q\ge2$, 
\[
 \alpha(H_q(n,3))
 \ge
 \frac{q^n}{n\exp(O_q(\sqrt{\log n}))}.
\]
In particular,
\[
 T_q(n,3)\ge q^n n^{-1-o(1)}.
\]
\end{theorem}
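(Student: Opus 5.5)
The plan is to color $[q]^n$ by a weighted checksum and take the largest color class. Choose positive integers $w_1,\dots,w_n\le N$ and an integer modulus $M>2(q-1)N$, and color $x\in[q]^n$ by
\[
 c(x)=\sum_{i=1}^n w_i x_i \bmod M .
\]
If $x,y$ lie in the same color class and $\dist(x,y)=3$, then $x$ and $y$ differ in exactly three coordinates $i,j,k$. There we get
\[
 w_i e_i+w_j e_j+w_k e_k\equiv 0\pmod M,
\]
where the differences $e_t=x_t-y_t$ are nonzero integers with $|e_t|\le q-1$. Since $|w_ie_i+w_je_j+w_ke_k|\le 3(q-1)N$, I would take $M>3(q-1)N$. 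Then the congruence becomes an equality over $\mathbb Z$.

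So it suffices to find weights $w_1,\dots,w_n\in[N]$ such that there is no relation
\[
 e_iw_i+e_jw_j+e_kw_k=0
\]
with distinct indices and $e_t\in\{\pm1,\dots,\pm(q-1)\}$. Each color class is then independent in $H_q(n,3)$. By pigeonhole, some class has size at least $q^n/M=q^n/O_q(N)$. The theorem therefore reduces to achieving $N\le n\exp(O_q(\sqrt{\log n}))$. The consequence for $T_q(n,3)$ is immediate, since an independent set is triangle-free.

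The main step, and the expected obstacle, is building such a set of $n$ integers, a Behrend-type construction for all bounded-coefficient three-term linear patterns at once. The pattern $w_i+w_j=w_k$ alone forces something like sum-freeness. Relations such as $aw_i+bw_j=(a+b)w_k$ are equally dangerous and are "invariant" equations, which rules out any polynomial-density set. So I would use a Behrend sphere construction with a base large enough to prevent carries.

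Write each weight in base $B=Cq\,d$ (for a suitable constant $C$) with $d$ digits. Each digit lies in $[0,d)$ after shifting, and all digit vectors lie on a common sphere $\|v\|^2=R$. To break the non-invariant relations, such as those where $e_i+e_j+e_k\ne0$, I would add a leading "marker" digit equal to a fixed value $m$ in the top position. A relation then forces $e_i+e_j+e_k=0$ in that digit, since no carries occur when $B>3(q-1)d$. With the marker in place, every surviving relation satisfies $e_i+e_j+e_k=0$ and holds digitwise:
\[
 e_iv_i+e_jv_j+e_kv_k=0 .
\]
Rewrite this as $v_k=\lambda v_i+(1-\lambda)v_j$ with $\lambda=-e_i/e_k$. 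When $\lambda\in(0,1)$, strict convexity of the sphere gives a contradiction for distinct points. For $\lambda\notin[0,1]$, the same argument applies after relabeling which point lies in the middle. Coefficients of mixed sign always put one point as a convex combination of the other two, because the three coefficients sum to zero.

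Finally, choose $d\approx\sqrt{\log n}$. Pigeonhole over $R$ gives a sphere containing $\gtrsim d^{d}/d^2\ge n$ points. Then
\[
 N\approx B^{d+1}=(Cqd)^{d+1}=n\cdot \exp\bigl(O_q(d)\bigr)=n\exp\bigl(O_q(\sqrt{\log n})\bigr).
\]
I expect balancing these parameters, and checking the no-carry and marker bookkeeping, to be the part that needs care.
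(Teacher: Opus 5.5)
Your architecture is correct and is essentially the paper's. You use a weighted checksum modulo $M>3(q-1)N$ and reduce to integer relations among the weights. Your marker digit (a large common offset) plays exactly the role of the paper's translation $W=H+B$, killing relations with $e_i+e_j+e_k\ne0$. A Behrend-type set then kills the zero-sum relations. The one difference is that you build that set by hand via strict convexity of the sphere, where the paper cites the Erd\H{o}s--Frankl--R\"odl/Alon--Shapira limited-ratio-free set. Your no-carry and convexity steps are fine.

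The gap is in the final parameter count, which is the step carrying the $\exp(O_q(\sqrt{\log n}))$ claim. You use the same $d$ both for the number of digits and for the digit range, so there are only $d^d$ digit vectors. With $d\approx\sqrt{\log n}$ this is $\exp\bigl(\tfrac12\sqrt{\log n}\,\log\log n\bigr)$, far smaller than $n$, so ``$d^d/d^2\ge n$'' is false. (The count should also be $d^d/d^3$, since $\|v\|^2<d^3$, but that is minor.) The identity $(Cqd)^{d+1}=n\exp(O_q(d))$ requires $d\log d\sim\log n$, i.e.\ $d\sim\log n/\log\log n$. In that regime the loss factor $(Cq)^{d+1}d^{O(1)}$ is $\exp(\Theta_q(\log n/\log\log n))$. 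So your construction, as written, proves only $\alpha(H_q(n,3))\ge q^n/(n\exp(O_q(\log n/\log\log n)))$. That suffices for $T_q(n,3)\ge q^nn^{-1-o(1)}$ but not for the main inequality.

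The fix is the standard Behrend decoupling:
\begin{itemize}
\item Take $d$ digits in $\{0,\dots,k-1\}$, base $B=3(q-1)k$ (so $3(q-1)(k-1)<B$ and no carries occur), and marker $1$, i.e.\ $w=B^d+\sum_{t<d}v_tB^t<2B^d$.
\item Since $\|v\|^2<dk^2$, some sphere contains at least $k^{d-2}/d$ vectors.
\item Choose $d=\lceil\sqrt{\log n}\rceil+2$ and $k=\lceil(nd)^{1/(d-2)}\rceil=\exp(O(\sqrt{\log n}))$. Then $k^{d-2}\ge nd$, and $k^{d-2}\le 2^{d-2}nd$.
\item Hence $N<2(3(q-1))^dk^d\le n\exp\bigl(O_q(d)+2\log(nd)/(d-2)\bigr)=n\exp(O_q(\sqrt{\log n}))$, as required.
\end{itemize}
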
 All logarithms without an indicated base are natural.
During the preparation of this manuscript, we became aware of
Yekhanin's recent work~\cite{yekhanin-real-lifts}, which gives
a general construction of large codes for every fixed minimum
Hamming distance at least four. The numerical bound in Theorem~\ref{thm:checksum} can be derived from his distance-four case.
Our construction using relation-avoiding weights was obtained independently, before
the appearance of his preprint. We present it here in the
exact-distance graph setting, where it provides a comparison
with the larger triangle-free sets obtained by our ternary
Ramsey construction.

Theorem~\ref{thm:checksum} deliberately forbids every distance-three edge.
For the ternary alphabet, we can obtain a substantially stronger bound by forbidding triangles
only.  The reason is that the type $a=0$ is an affine line
\[
       x,\quad x+u,\quad x-u,
       \qquad \operatorname{wt}(u)=3,
\]
whereas the type $a=1$ can be separated by a linear statistic.  A quadratic
statistic built from a zero-sum-free labeling of the edges of a complete
graph then eliminates the remaining affine-line triangles.

To state the result, let us recall that a  cap set  in $\F_3^m$ is a vertex set which does not contain $3$ collinear points.
Let $M(m)$ be the maximum size of a cap in $\F_3^m$ and
put
\begin{equation}\label{eq:cap-capacity}
 \kappa_3=\lim_{m\to\infty}M(m)^{1/m},
 \qquad \alpha=\log_3\kappa_3.
\end{equation}
The limit exists because Cartesian products of caps give
$M(m+\ell)\ge M(m)M(\ell)$.  

\begin{theorem}[Ternary case, distance $r=3$]\label{thm:ternary-r3}
As $n\to\infty$,
\[
 T_3(n,3)
 \ge
 \left(\frac{\alpha}{27}-o(1)\right)
 \frac{\log\log n}{\log n}\,3^n.
\]
Along an infinite sequence of dimensions,
\[
 T_3(n,3)
 \ge
 \left(\frac{\alpha}{9}-o(1)\right)
 \frac{\log\log n}{\log n}\,3^n.
\]
\end{theorem}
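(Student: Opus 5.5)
The plan is to build $A$ as a union of fibres of a map $x\mapsto(\ell(x),Q(x))$. Here $\ell$ is a linear statistic that destroys every triangle of type $a=1$. The map $Q\colon\F_3^n\to\F_3^m$ is quadratic and destroys every triangle of type $a=0$, that is, every affine line $x,x+u,x-u$ with $\operatorname{wt}(u)=3$. By the classification in the introduction, a distance-$3$ triangle in $[3]^n=\F_3^n$ is either of type $a=0$ (support $3$, all three symbols distinct in each active coordinate) or of type $a=1$ (support $4$: three coordinates with one exceptional vertex each, and one coordinate with three distinct symbols).

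\emph{Step 1 (type $a=1$).} I would look for a linear form $\ell(x)=\sum_i w_ix_i$, with weights in $\F_3$ or in a small cyclic group, such that the three vertices of a type-$a=1$ triangle cannot all have the same value of $\ell$ unless they also violate the type-$a=0$ condition. A direct computation on the four active coordinates should show that the differences $y-x$ and $z-x$ have a rigid shape. Restricting to a single fibre $\ell=\mathrm{const}$ then rules out these triangles, at a cost of a constant factor. I expect this cost to be where the constants $1/27$ versus $1/9$ come from: fixing $\ell$ costs $1/3$ or $1/9$, and the better constant along special $n$ comes from choosing $n$ so that the block partitioning below is exact.

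\emph{Step 2 (type $a=0$).} Put
\[
Q(x)=\sum_{i<j}c_{ij}x_ix_j,
\]
where $c_{ij}\in\F_3^m$ labels the edges of $K_n$. Along a line we have $Q(x+tu)=Q(x)+tB(x,u)+t^2Q(u)$, so the three values on the line sum to $-Q(u)$. If $u$ has support $\{i,j,l\}$, then $Q(u)$ is a signed sum $\pm c_{ij}\pm c_{il}\pm c_{jl}$. I would take a $k$-colouring of $E(K_n)$ with no monochromatic triangle, which exists with $k\sim\log n/\log\log n$ since $R_k(3)\le e\,k!$ gives $n$ up to roughly $k!$. Each colour is then replaced by a vector in $\F_3^m$ chosen from a cap, possibly via a cap partition, so that every signed triple sum over a non-monochromatic triangle is nonzero; monochromatic triangles do not occur at all. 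Then $Q(u)\ne0$ for every weight-$3$ $u$. Consequently, any set $C\subseteq\F_3^m$ on which three points summing to a nonzero value cannot all lie is a valid target for $Q$. The simplest choice is a single point, and a better choice is a cap-structured set, for which coincidences among $Q(x),Q(x\pm u)$ must be handled separately.

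\emph{Step 3 (counting) and the main obstacle.} I would take $A=\{x:\ell(x)=c_0,\ Q(x)\in C\}$ and choose $c_0$ and $C$ by averaging over all fibres. The main obstacle is the density bookkeeping: a naive single fibre of $Q$ costs $3^{-m}$, whereas the target is $\asymp 1/k$ with the constant $\alpha$. To get there, I would use a cap partition of the label space, so that $C$ is a union of $\approx 3^{m(1-\alpha)}$-sized pieces, and tune $m\approx\log_{\kappa_3}k$. The hard part is proving that the admissible targets have total relative measure about $\alpha/k$ rather than about $k^{-1/\alpha}$. This also requires controlling the degenerate cases in which two of $Q(x),Q(x+u),Q(x-u)$ coincide. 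Finally, I would optimise $k$ against $n\le R_k(3)-1$ to obtain the factor $\log\log n/\log n$, and the special sequence of $n$ for the second bound.
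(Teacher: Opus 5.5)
Your architecture is the paper's: colour by $(\sigma(x),Q(x))$ with $\sigma(x)=\sum_i x_i$ and $Q(x)=\sum_{i<j}\gamma_{ij}x_ix_j$. Note that $\sigma$ also forces the direction $u$ of a monochromatic line to have equal entries, so only the unsigned sums $\gamma_{ij}+\gamma_{ik}+\gamma_{jk}$ must be nonzero. However, the quantitative core is missing, and two steps fail as written.

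\emph{The Ramsey input runs the wrong way.} The bound $R_k(3)\le e\,k!$ is an upper bound and cannot produce a $k$-colouring of $K_n$ without monochromatic triangles. You need a lower bound on $R_k(3)$. The classical exponential lower bounds only give $k\asymp\log n$, which destroys the $\log\log n$ gain. The paper relies on the superexponential bound $R_s(3)\ge(cs^{1/3}/\log s)^s$, that is, $\log t\ge(\tfrac13-o(1))s\log s$. This is not $k!$, and the exponent $\tfrac13$ enters the constants.

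\emph{Your remedy for the $k^{1/\alpha}$ loss does not work.} You rightly note that putting $k$ colours into one cap forces $|G|\ge k^{1/\alpha}$. But taking $A$ as a union of $Q$-fibres over a cap-structured target $C$ does not help. On a line the three $Q$-values are $p$, $p+b+q$, $p-b+q$, where $q=Q(u)\ne0$ and $b=B(x,u)$ depends linearly on $x$. For $b\in\{0,\pm q\}$, two values coincide and the third differs by $\pm q$. So an admissible $C$ must essentially have $C-C$ disjoint from all triangle sums. A cap has a large difference set, and the obvious admissible sets are cosets of subgroups avoiding those sums, which amounts to passing to a quotient labelling. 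That gives nothing.

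\emph{The missing idea is to change the labelling, not the target: palette amplification.} Put $Q=3^m$ and $s=\lfloor Q^\beta\rfloor$ with $\beta<\alpha$.
\begin{itemize}
\item Partition almost all of $\F_3^m$ into $L=(1-o(1))Q/s$ disjoint caps $P_1,\dots,P_L$ of size $s$. Cover by random translates of a large cap, then chop the parts.
\item Take an $s$-colouring $\varphi$ of $K_t$ with no monochromatic triangle.
\item For $u,v\in[t]^L$, let $j$ be the first coordinate where they differ, and label $uv$ by the element of $P_j$ corresponding to $\varphi(u_jv_j)$.
\end{itemize}
Consider three words at their first splitting coordinate. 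If they are pairwise distinct there, the three labels lie in one cap and are not all equal, so their sum is nonzero. If instead $u_j=v_j\ne w_j$, the sum is $a+a+b=b-a$ with $a\in P_j$ and $b\in P_{j'}$ for some $j'>j$. This is nonzero because the palettes are disjoint.

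This gives $\log Z(\F_3^m)\ge(\beta/3-o(1))Q\log Q$, so every element of $G$ is used as a label. One can take $|G|\approx(3/\alpha)\log n/\log\log n$ along a sequence of $n$. For general $n$, rounding $Q$ up to a power of $3$ costs another factor $3$. With the $3|G|$ colour classes, this yields the constants $\alpha/9$ and $\alpha/27$. They do not come from the cost of fixing $\ell$ or from exact block partitions.
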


For $q=3$, this improves the lower bound supplied by
Theorem~\ref{thm:checksum} to a density of order
$\log\log n/\log n$. The improvement comes from controlling
triangles directly, while allowing edges within each class.

In the opposite direction, Proposition~\ref{prop:distance-three-counting-upper}
gives an explicit constant-density upper bound for every fixed $q\ge3$.
In particular, we prove that
\[
 T_3(n,3)\le\left(\frac5{27}+O(n^{-1})\right)3^n.
\]
Whether this density tends to zero remains open.

For every prime power $q$, the extended BCH construction in
Proposition~\ref{prop:prime-power-bch} gives
\[
 T_q(n,r)=\Omega_{q,r}\!\left(
 \frac{q^n}{n^{\lfloor(q-1)r/q\rfloor}}\right)
 \qquad(r\ge2\text{ fixed}).
\]
At $q=3$ and $r=4$, it matches Proposition~\ref{prop:binary-transfer}
and yields the following exact order.

\begin{theorem}[Ternary case,  distance $r=4$]\label{thm:ternary-r4}
\[
       T_3(n,4)=\Theta\!\left(\frac{3^n}{n^2}\right).
\]
\end{theorem}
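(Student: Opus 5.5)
The upper bound is immediate from Proposition~\ref{prop:binary-transfer} with $h=2$, $q=3$: $T_3(n,4)=O(3^n/n^2)$. So the work is the lower bound $T_3(n,4)=\Omega(3^n/n^2)$. It is the case $q=3$, $r=4$ of Proposition~\ref{prop:prime-power-bch}, since $\lfloor (q-1)r/q\rfloor=\lfloor 8/3\rfloor=2$. Below is the direct argument I would give, so that the theorem does not depend on the general statement.

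The plan is to exhibit a linear code $C\subseteq\F_3^n$ of codimension $2m+O(1)$, with $3^m\approx n$, that contains no triangle of $H_3(n,4)$. Then every coset of $C$ is also triangle-free, since the condition is translation invariant. Averaging over cosets gives a triangle-free set of size at least $3^n/|\F_3^n/C|=\Omega(3^n/n^2)$. Intermediate $n$ are handled by shortening, i.e.\ restricting to a coordinate subset and taking a product with the full space on the remaining coordinates, at the cost of a constant factor.

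\textbf{Step 1: translate triangles into codeword conditions.} Suppose $x,y,z$ lie in a coset of $C$ and are pairwise at distance $4$. Set $u=y-x$ and $v=z-x$. Then $u,v\in C$, $\operatorname{wt}(u)=\operatorname{wt}(v)=\operatorname{wt}(u-v)=4$, and by the type classification in the introduction the common support has size $4+a$ with $a\in\{0,1,2\}$.
\begin{itemize}
\item[] Type $a=2$ (six active coordinates): $u$ and $v$ overlap in two coordinates where they agree.
\item[] Type $a=1$ (five active coordinates): two coordinates are of the "all distinct" kind, where $u_i\ne v_i$ and both are nonzero.
\item[] Type $a=0$ (four coordinates, all distinct symbols): $v=-u$, so $x,x+u,x-u$ is an affine line.
\end{itemize}
In every type the configuration produces a nonzero codeword of weight at most $4$, namely $u$ itself. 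It therefore suffices to take $C$ with minimum distance at least $5$, which even makes $C$ independent in $H_3(n,4)$.

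\textbf{Step 2: minimum distance $5$.} A code of minimum distance $5$ would cost codimension about $\lceil 2\cdot 4/3\rceil\cdot m\approx 3m$ in the ternary BCH bound. That gives only $3^n/n^{3}$, which is not enough. So I need to exploit the triangle structure rather than kill all weight-$4$ words.

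Use $u,v,u-v$ all of weight $4$, all in $C$. Take $C$ to be the extended BCH code with zeros $\alpha^{1}$ and $\alpha^{2}$ (together with their conjugates) over $\F_{3^m}$, with coordinates indexed by field elements $\beta$. Then codewords $w$ satisfy
\[
\sum_\beta w_\beta=\sum_\beta w_\beta\beta=\sum_\beta w_\beta\beta^2=0,
\]
and the codimension is $2m+1$. Here $\beta^3$ is automatic from the first power by Frobenius, so the effective constraints on $\sum w_\beta\beta^{j}$ cover $j\le 3$ except $j$ conjugate-related to $2$; this gives designed distance $5$ except for obstructions through the power $4$.

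The check I would carry out is that each of the three triangle types forces a linear dependence among at most four points with power sums $p_0,\dots,p_3=0$. By Vandermonde, such a dependence is impossible for distinct $\beta$. For type $a=0$, $u$ has weight $4$ with $p_0=\dots=p_3=0$, which the Vandermonde argument excludes. For types $1$ and $2$, I would combine the relations of $u$, $v$ and $u-v$ to isolate a relation supported on at most $4$ points.

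\textbf{Main obstacle.} The hard step is verifying that the extended BCH code of codimension $\sim 2m$ indeed kills all three triangle types. In particular, weight-$4$ codewords can exist, but they must not form equilateral triples. I would first try to show that any such triple forces a weight-$\le 4$ codeword annihilated by the powers $0,1,2,3$, contradicting Vandermonde; if that fails, I would fall back on the statement of Proposition~\ref{prop:prime-power-bch} as given.
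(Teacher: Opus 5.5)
Your opening reduction is exactly the paper's proof. The theorem is recorded as Corollary~\ref{cor:ternary-r4}, which combines Proposition~\ref{prop:prime-power-bch} at $q=3$, $r=4$ (so $J=\{1,2\}$, $e=2$, giving $T_3(n,4)>2\cdot3^n/(27n^2)$) with Proposition~\ref{prop:binary-transfer} at $h=2$.

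The ``direct argument'' you add, however, rests on a false premise and leaves open a step that is actually trivial. Minimum distance $5$ does not cost codimension about $3m$. That count is for the narrow-sense code with zeros $\alpha^1,\dots,\alpha^4$, where $\alpha^4$ starts a third cyclotomic coset. The code you actually write down, cut out by $S_0=S_1=S_2=0$, also satisfies $S_3(u)=S_1(u)^3=0$. So it has four consecutive vanishing power sums $S_0,\dots,S_3$. For any nonzero $u$ of weight $w\le4$, the Vandermonde system in rows $j=0,\dots,w-1$ then forces $u=0$. Hence this code of codimension $2m+1$ already has minimum distance at least $5$. Your claims that ``weight-$4$ codewords can exist'' and that there are ``obstructions through the power $4$'' are wrong.

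You already had the finishing step. In Step~1 you note that $u=y-x$ has weight exactly $4$ and lies in $C$ for every type. In Step~2 you apply Vandermonde to such a $u$, but only for type $a=0$. The identical sentence disposes of types $a=1,2$, with no need to combine relations among $u$, $v$ and $u-v$. Every coset is therefore independent in $H_3(n,4)$, which is precisely the paper's statement that nonempty fibers of $\Phi$ have minimum distance at least $r+1$. The paper also takes the union of two fibers, a bipartite set, to gain a factor $2$.

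Your treatment of intermediate $n$ also needs correcting. A product $A'\times\F_3^{k}$ with the full space on leftover coordinates is not triangle-free once $k\ge4$. For $a\in A'$ and $\operatorname{wt}(u)=4$, the points $(a,x)$, $(a,x+u)$, $(a,x-u)$ are pairwise at distance $4$. The correct move, as in the paper, is to let $3^m$ be the least power of three with $3^m\ge n$. Then index the $n$ coordinates by distinct elements of $\F_{3^m}$. This is genuine shortening: deleting parity-check columns cannot lower the minimum distance. Since $3^m<3n$, the number of cosets is at most $3^{2m+1}<27n^2$.
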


Theorems~\ref{thm:r1r2} and~\ref{thm:ternary-r4} suggest that the
binary-subcube upper bound may give the correct power of $n$ at every
fixed even distance. The general sparse hypergraph bound in
Theorem~\ref{thm:sparse-hypergraph} gives
$\Omega_{q,h}(q^n\sqrt{\log n}/n^{3h/2})$ for $q\ge3$.
The BCH construction improves the polynomial exponent in some cases,
but the even-distance upper bound is not matched in general.

\begin{conjecture}[Fixed even distances]\label{conj:even}
For every fixed $q\ge2$ and $h\ge1$,
\[
       T_q(n,2h)=\Theta_{q,h}\!\left(\frac{q^n}{n^h}\right).
\]
\end{conjecture}

The central open case is nevertheless the odd-distance ternary problem.
Even for $T_3(n,3)$, there is no vanishing-density upper bound matching
Theorem~\ref{thm:ternary-r3}.  For odd $r\ge5$, the local classification
introduces several exceptional-coordinate types simultaneously, and neither
binary restriction nor a single quadratic statistic controls them all.
Understanding how to combine higher-order algebraic labels with these local
types appears to be the main structural obstacle.

The paper is organized as follows. Section~\ref{sec: pre} establishes
the local classification, the averaging principles, the sparse
hypergraph lower bound, and the counting upper bound at distance three.
Section~\ref{sec:small-r} settles distances one and two and presents
the prime-power BCH construction, with ternary distance four as a
corollary. Section~\ref{sec:checksum} gives the distance-three
construction using relation-avoiding weights, and Section~\ref{sec:ternary-distance-three}
develops the ternary Ramsey construction. Finally, Section~\ref{sec:conc}
discusses the remaining questions and the converse connection to
multicolor Ramsey upper bounds.

\section{Preliminaries}\label{sec: pre}

\subsection{Local structure and averaging}
We first make precise the coordinatewise description used throughout the
paper. 

\begin{definition}
We call a coordinate \emph{active} for a triple of vectors if its three entries are
not all equal.
\end{definition}

\begin{proposition}[Local type classification]\label{prop:local-type}
Suppose that $x,y,z\in[q]^n$ satisfy
\[
 \dist(x,y)=\dist(x,z)=\dist(y,z)=r.
\]
At each active coordinate, either exactly one of $x,y,z$ is exceptional and
the other two agree, or all three symbols are distinct. If $a_x,a_y,a_z$
count the three exceptional-coordinate types and $d$ counts the
all-distinct coordinates, then
\[
 a_x=a_y=a_z=:a, \qquad 2a+d=r.
\]
In particular, $0\le a\le\lfloor r/2\rfloor$, $d=r-2a$, and the active
support has size $r+a$.
\end{proposition}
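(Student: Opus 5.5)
The plan is a direct coordinatewise count. I fix a coordinate $i$ and look at the multiset $\{x_i,y_i,z_i\}$. Either all three symbols are equal (the coordinate is inactive), or exactly two distinct symbols occur, or all three are distinct. If exactly two occur, one symbol appears twice and the other once, so exactly one of $x,y,z$ is exceptional and the other two agree. This gives the stated dichotomy at active coordinates. I then let $A_x,A_y,A_z$ be the sets of coordinates where $x$, $y$, $z$ respectively is the unique exceptional vector, and $D$ the set of all-distinct coordinates. These four sets partition the active support, with $a_x=|A_x|$, and similarly for $a_y$, $a_z$, and $d=|D|$.

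Next I express each pairwise distance in these terms. The vectors $x$ and $y$ differ at coordinate $i$ exactly when $i\in A_x\cup A_y\cup D$: in $A_z$ they agree, on inactive coordinates they agree, and otherwise they disagree. Hence
\[
 \dist(x,y)=a_x+a_y+d,\qquad \dist(x,z)=a_x+a_z+d,\qquad \dist(y,z)=a_y+a_z+d .
\]
Setting all three equal to $r$ and subtracting pairwise gives $a_y=a_z$, $a_x=a_z$, and so $a_x=a_y=a_z=:a$. Substituting back yields $2a+d=r$.

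The consequences are immediate. Since $d\ge0$, we get $a\le r/2$, and since $a$ is an integer, $0\le a\le\lfloor r/2\rfloor$. Also $d=r-2a$, and the active support has size $3a+d=3a+r-2a=r+a$.

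There is no real obstacle. The only point needing care is checking that each of the four cases contributes exactly as claimed to each pairwise distance.
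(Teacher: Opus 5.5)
Your proposal is correct and follows the paper's proof essentially step for step. Both write the three pairwise distances as $a_x+a_y+d$, $a_x+a_z+d$, $a_y+a_z+d$, subtract to get $a_x=a_y=a_z$, and read off $2a+d=r$ and the support size $3a+d=r+a$; you additionally justify the active-coordinate dichotomy by the case split on $\{x_i,y_i,z_i\}$, which the paper leaves implicit.
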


\begin{proof}
Every active coordinate contributes to exactly two of the three pairwise
distances if one symbol is exceptional, and to all three distances if the
symbols are pairwise distinct. Consequently
\[
 a_x+a_y+d=r,\qquad a_x+a_z+d=r,\qquad a_y+a_z+d=r.
\]
Subtracting these equations gives $a_x=a_y=a_z=a$, after which
$2a+d=r$. The support calculation is
$a_x+a_y+a_z+d=3a+d=r+a$.
\end{proof}

In the binary cube an all-distinct coordinate is impossible, so Proposition
\ref{prop:local-type} forces $d=0$ and $r=2a$. This recovers the familiar
parity obstruction to binary equilateral triangles. If $q\ge3$, every
admissible pair $(a,d)$ satisfying $r+a\le n$ is realizable.

We next give the two averaging arguments used to transfer bounds. Although
elementary, they are especially useful because they preserve density exactly.

\begin{lemma}[Coordinate-face averaging]\label{lem:face-average}
If $n\ge k\ge r$, then
\[
 \frac{T_q(n,r)}{q^n}\le \frac{T_q(k,r)}{q^k}.
\]
\end{lemma}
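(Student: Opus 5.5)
The plan is a standard averaging over coordinate faces. Fix an optimal set $A\subseteq[q]^n$ with $|A|=T_q(n,r)$ such that $H_q(n,r)[A]$ is triangle-free. Fix a set $S\subseteq[n]$ of $k$ coordinates and an assignment $w\in[q]^{[n]\setminus S}$ to the remaining coordinates. The face $F_{S,w}=\{x\in[q]^n: x|_{[n]\setminus S}=w\}$ is identified with $[q]^k$ by projecting onto $S$. Under this identification Hamming distance is preserved exactly: two points of $F_{S,w}$ agree on all coordinates outside $S$, so their distance in $[q]^n$ equals the distance of their projections in $[q]^k$. Consequently the projection of $A\cap F_{S,w}$ induces a subgraph of $H_q(k,r)$ isomorphic to an induced subgraph of $H_q(n,r)[A]$, hence is triangle-free, and so $|A\cap F_{S,w}|\le T_q(k,r)$.

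Next I will sum over faces. Fix a single $S$ (say the first $k$ coordinates); the $q^{n-k}$ faces $F_{S,w}$, $w\in[q]^{n-k}$, partition $[q]^n$. Therefore
\[
 T_q(n,r)=|A|=\sum_{w}|A\cap F_{S,w}|\le q^{n-k}\,T_q(k,r),
\]
and dividing by $q^n$ gives the claimed inequality. Averaging over all $\binom nk$ choices of $S$ is not needed, although it would give the same bound.

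The hypothesis $k\ge r$ only ensures that $H_q(k,r)$ is a meaningful (nonempty-edge) graph so that the comparison is not vacuous. The inequality itself holds formally for any $k\le n$, since for $k<r$ we have $T_q(k,r)=q^k$ and the bound is trivial. I expect no real obstacle here. The only point that needs care is the observation that distance is preserved on a face, which is what makes triangle-freeness (rather than just independence) descend to the face.
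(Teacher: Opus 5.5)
Your proposal is correct and is essentially the paper's argument. The paper averages over a uniformly random assignment to the $n-k$ fixed coordinates and uses the expected intersection size $|A|q^{-(n-k)}$; your deterministic sum over the partition of $[q]^n$ into the $q^{n-k}$ parallel faces is the same computation, and your observations about isometry of faces and the harmlessness of the hypothesis $k\ge r$ are accurate.
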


\begin{proof}
Let $A\subseteq[q]^n$ be triangle-free. Choose $k$ coordinates to remain
free and assign symbols independently and uniformly to the other $n-k$
coordinates. The resulting face is an isometric copy of $[q]^k$, and each
element of $A$ belongs to it with probability $q^{-(n-k)}$. Hence the
expected size of the intersection is $|A|q^{-(n-k)}$. Every such
intersection is triangle-free in $H_q(k,r)$, so
$|A|q^{-(n-k)}\le T_q(k,r)$.
\end{proof}

\begin{lemma}[Alphabet restriction]\label{lem:alphabet-restriction}
If $2\le p\le q$, then
\[
 \frac{T_q(n,r)}{q^n}\le \frac{T_p(n,r)}{p^n},
 \qquad\text{equivalently}\qquad
 T_q(n,r)\le\left(\frac qp\right)^nT_p(n,r).
\]
\end{lemma}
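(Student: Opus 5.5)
The plan is to average over sub-alphabet boxes, in the same spirit as the proof of Lemma~\ref{lem:face-average}, with a random product of $p$-element subsets of $[q]$ playing the role of the random face. Let $A\subseteq[q]^n$ induce a triangle-free subgraph of $H_q(n,r)$. For each coordinate $i\in[n]$ choose independently and uniformly at random a $p$-element subset $S_i\subseteq[q]$, and put $B=S_1\times\cdots\times S_n$.

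The first step is to identify $B$ isometrically with $[p]^n$. Fix bijections $\phi_i\colon S_i\to[p]$ and let $\Phi=\phi_1\times\cdots\times\phi_n$. Since each $\phi_i$ is injective, two words of $B$ differ in coordinate $i$ exactly when their images under $\Phi$ differ there. Hence $\Phi$ preserves Hamming distance, and in particular preserves adjacency in the $r$-distance graph. It follows that $\Phi(A\cap B)$ induces a triangle-free subgraph of $H_p(n,r)$, and therefore $|A\cap B|\le T_p(n,r)$ for every choice of the sets $S_i$.

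The second step is the probability computation. A fixed word $x\in[q]^n$ lies in $B$ exactly when $x_i\in S_i$ for every $i$. Each of these events has probability $\binom{q-1}{p-1}/\binom qp=p/q$, and they are independent across coordinates, so $\Pr[x\in B]=(p/q)^n$. By linearity of expectation,
\[
 \mathbb E|A\cap B|=|A|\left(\frac pq\right)^n .
\]
Combining this with the deterministic bound from the first step gives $|A|(p/q)^n\le T_p(n,r)$. Taking $A$ of maximum size yields $T_q(n,r)\le(q/p)^nT_p(n,r)$, which is the claimed density inequality.

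There is no real obstacle here. The only point needing care is that the restriction to $B$ preserves exact distances and not merely an upper bound on them. This holds because the identification $\Phi$ acts coordinatewise and injectively, so no new coincidences are created and no existing ones are destroyed.
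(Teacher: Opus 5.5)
Your proof is correct and is essentially the paper's argument: a random product of independently chosen $p$-element subsets of the alphabet is an isometric copy of $[p]^n$ that contains each fixed word with probability $(p/q)^n$, and averaging the bound $|A\cap B|\le T_p(n,r)$ gives the inequality. Beyond the paper's version, you only spell out the coordinatewise isometry and the computation $\binom{q-1}{p-1}/\binom qp=p/q$, which the paper leaves implicit.
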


\begin{proof}
In each coordinate choose a uniformly random $p$-element subset of the
$q$-symbol alphabet, independently of the other coordinates. Their product
is an isometric copy of $[p]^n$. A fixed word lies in this copy with
probability $(p/q)^n$. Thus, for every triangle-free $A\subseteq[q]^n$, the
expected size of its intersection with the random copy is $(p/q)^n|A|$.
Every intersection has size at most $T_p(n,r)$, proving the result.
\end{proof}

Restriction-and-averaging arguments of this kind also occur in the
literature on perfect hashing and trifferent codes.  Bhandari and Khetan
average translates of a code against fixed symbol-weight layers, and note
the corresponding formulation for an arbitrary test set
\cite[Lemma~1.3 and Remark~1.4]{bhandari-khetan}.  Bishnoi, Kielak,
Kov\'acs, Nagy, Somlai, Vizer, and Zheng use the resulting translate-density
inequality together with coordinate-deletion recurrences in their
generalized trifference problem
\cite[Proposition~1.3 and Lemma~2.5]{bishnoi-et-al-trifference}.  These are
close variants of the same principle: a global extremal density is compared
with the extremal density on a structured family of restrictions.

Taking $p=2$ and applying \eqref{eq:BCL} proves
Proposition~\ref{prop:binary-transfer}.

\subsection{A general lower bound from sparse hypergraph coloring}

The local classification gives degree and codegree estimates for
$\mathcal H_{q,n,r}$. Applying the sparse hypergraph coloring theorem of
Cooper and Mubayi~\cite{cooper-mubayi} then improves the elementary
alteration bound by a factor of $\sqrt{\log n}$.

\begin{theorem}[Sparse hypergraph bound]\label{thm:sparse-hypergraph}
For fixed $q\ge3$ and fixed $r\ge1$,
\[
 T_q(n,r)=\Omega_{q,r}\!\left(
 \frac{q^n\sqrt{\log n}}{n^{(r+\lfloor r/2\rfloor)/2}}
 \right).
\]
For $q=2$ and fixed even $r$, the same argument gives
\[
 T_2(n,r)=\Omega_r\!\left(
 \frac{2^n\sqrt{\log n}}{n^{3r/4}}\right).
\]
\end{theorem}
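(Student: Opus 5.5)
We will apply the Cooper--Mubayi sparse hypergraph coloring theorem to the $3$-uniform hypergraph $\mathcal H=\mathcal H_{q,n,r}$, with $N=q^n$ vertices. We use it in the following form. If a $3$-uniform hypergraph on $N$ vertices has maximum degree at most $\Delta$, and every pair of vertices lies in at most $\Delta^{1/2}/f$ edges with $f\to\infty$ suitably, then
\[
\alpha\ \ge\ c\,\frac{N}{\Delta^{1/2}}\sqrt{\log f}.
\]
Since $T_q(n,r)=\alpha(\mathcal H)$, it suffices to bound the degree and the codegree of $\mathcal H$ and then read off the exponent.

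\textbf{Degree.} Fix $x$. By Proposition~\ref{prop:local-type}, a triangle $\{x,y,z\}$ of local type $a$ is determined as follows. Choose the active support of size $r+a$. Partition it into the three exceptional classes of size $a$ and the all-distinct class of size $d=r-2a$. Then choose symbols in $O_q(1)$ ways per coordinate. Hence the number of triangles through $x$ of type $a$ is $\Theta_{q,r}(n^{r+a})$. This is maximized at $a=\lfloor r/2\rfloor$, and all such types are realizable when $q\ge3$. So
\[
\Delta=\Theta_{q,r}\bigl(n^{r+\lfloor r/2\rfloor}\bigr),\qquad \Delta^{1/2}=n^{(r+\lfloor r/2\rfloor)/2}.
\]
For $q=2$ only $a=r/2$ occurs, which gives $\Delta=\Theta(n^{3r/2})$ and $\Delta^{1/2}=n^{3r/4}$, matching the second claim.

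\textbf{Codegree.} Fix $x,y$ with $\dist(x,y)=r$. A third vertex $z$ must agree with $x$ and $y$ outside their difference set $D$ (size $r$) except on the $a$ coordinates where $z$ is exceptional. Inside $D$, the choices are bounded by a constant depending on $q$ and $r$. So the codegree is at most
\[
O_{q,r}\bigl(\max_a n^{a}\bigr)=O_{q,r}\bigl(n^{\lfloor r/2\rfloor}\bigr).
\]
Compare this with $\Delta^{1/2}=n^{(r+\lfloor r/2\rfloor)/2}$. The ratio is
\[
n^{(r-\lfloor r/2\rfloor)/2}=n^{\lceil r/2\rceil/2}\ \ge\ n^{1/2}
\]
for $r\ge1$. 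We may therefore take $f=n^{c}$ for some $c>0$, which gives $\sqrt{\log f}=\Theta(\sqrt{\log n})$. In the binary case the codegree is $O(n^{r/2})$ against $\Delta^{1/2}=n^{3r/4}$, so again $f=n^{r/4}$.

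\textbf{Conclusion and main obstacle.} Plugging in gives
\[
T_q(n,r)=\Omega\bigl(q^n\sqrt{\log n}/n^{(r+\lfloor r/2\rfloor)/2}\bigr),
\]
and similarly for $q=2$. The main point requiring care is using the exact hypothesis of the Cooper--Mubayi theorem: it may be phrased with average degree, or with a codegree bound of the form $\Delta^{1/2}/f$ for $f$ in a restricted range. I would check the precise statement and cap $f$ (say $f=\log n$ or a small power of $n$) so that it is admissible. Any such choice still yields $\sqrt{\log f}=\Theta(\sqrt{\log n})$, up to constants when $f$ is a power of $n$. For $r=1$ there are no $a\ge1$ types, but the same computation goes through trivially.
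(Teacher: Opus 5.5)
Your proposal is correct and follows essentially the same route as the paper. You get the degree $\Theta_{q,r}(n^{r+\lfloor r/2\rfloor})$ from the local-type count and the pair codegree $O_{q,r}(n^{\lfloor r/2\rfloor})$ from the distance equations. You then apply the $3$-uniform Cooper--Mubayi bound $\chi(\mathcal H)=O(\sqrt{\Delta/\log f})$, equivalently your independence form, with $f$ a small power of $n$.

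One small slip is in the binary case: $f=n^{r/4}$ exactly violates the codegree hypothesis by a constant factor. There the codegree is $\sim\binom{r}{r/2}n^{r/2}/(r/2)!$, while $\Delta^{1/2}/n^{r/4}\sim n^{r/2}/(\sqrt2((r/2)!)^{3/2})$, which is smaller. Take a strictly smaller power such as $f=n^{r/8}$, as the paper does; this still gives $\sqrt{\log f}=\Theta(\sqrt{\log n})$.
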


\begin{proof}
Fix $x\in[q]^n$ and a local type $a$, and put $d=r-2a$.
To construct an ordered pair $(y,z)$ of this type, choose disjoint
coordinate sets of sizes $a,a,a,d$ for the three exceptional types
and the all-distinct type. There are $q-1$ choices at each
exceptional coordinate and $(q-1)(q-2)$ choices at each all-distinct
coordinate. Thus the number of ordered pairs is
\begin{equation}\label{eq:type-count}
 \frac{(n)_{r+a}}{(a!)^3d!}(q-1)^{r+a}(q-2)^d,
\end{equation}
where $(n)_m=n(n-1)\cdots(n-m+1)$.
Let $\mathcal H=\mathcal H_{q,n,r}$ and put $h=\lfloor r/2\rfloor$.
For fixed $q\ge3$ and $r$, summing \eqref{eq:type-count} over
$0\le a\le h$ and dividing by two gives
\[
 \Delta:=\Delta(\mathcal H)=\Theta_{q,r}(n^{r+h}).
\]

The maximum pair codegree $\Delta_2(\mathcal H)$ is the largest
number of hyperedges containing a fixed pair of distinct vertices.
A pair not at distance $r$ has codegree zero. Fix $x,y$ with
$\dist(x,y)=r$, and let $S=\{i:x_i\ne y_i\}$, so $|S|=r$.
For a common distance-$r$ neighbour $z$, let $a$ count the
coordinates outside $S$ at which $z$ differs from the common value
of $x,y$. Within $S$, let $b,c,d$ count, respectively, the
coordinates where $z=x$, where $z=y$, and where $z$ differs from
both. The distance equations are
\[
 b+c+d=r,\qquad a+c+d=r,\qquad a+b+d=r.
\]
Hence $b=c=a$ and $d=r-2a$, so the codegree of $\{x,y\}$ is
\begin{equation}\label{eq:pair-codegree}
 \sum_{a=0}^{\min\{h,n-r\}}
 \binom{n-r}{a}(q-1)^a
 \frac{r!}{(a!)^2(r-2a)!}(q-2)^{r-2a}
 =O_{q,r}(n^h).
\end{equation}

The $3$-uniform case of the theorem of Cooper and
Mubayi~\cite{cooper-mubayi} states that a hypergraph of maximum
degree $\Delta$ and maximum pair codegree at most $\Delta^{1/2}/f$
has chromatic number $O(\sqrt{\Delta/\log f})$ for sufficiently
large $f$. Here
$\Delta^{1/2}/n^h=\Theta_{q,r}(n^{(r-h)/2})$, so we may take
$f=n^{(r-h)/4}$ for all sufficiently large $n$. Since $r-h>0$,
\[
 \chi(\mathcal H)=O_{q,r}\!\left(
 \frac{n^{(r+h)/2}}{\sqrt{\log n}}\right).
\]
Each color class is independent in $\mathcal H$ and thus induces
a triangle-free subgraph of $H_q(n,r)$. A largest class has size
at least $q^n/\chi(\mathcal H)$, proving the first assertion.

When $q=2$ and $r$ is even, only the type $a=r/2$, $d=0$ survives.
Then $\Delta=\Theta_r(n^{3r/2})$, and a distance-$r$ pair has codegree
\[
 \binom{n-r}{r/2}\frac{r!}{((r/2)!)^2}=O_r(n^{r/2}).
\]
Taking $f=n^{r/8}$ proves the binary assertion, which remains
weaker than the sharp bound \eqref{eq:BCL}.
\end{proof}

For comparison, elementary alteration uses only the vertex-degree
estimate. With $h=\lfloor r/2\rfloor$, selecting vertices with
probability $p=c_{q,r}n^{-(r+h)/2}$ gives $pq^n$ expected selected
vertices and $O_{q,r}(p^3q^n n^{r+h})$ expected selected triangles.
For sufficiently small $c_{q,r}>0$, deleting one vertex from each
surviving triangle gives $\Omega_{q,r}(q^n/n^{(r+h)/2})$ vertices.
The codegree estimate supplies the additional factor $\sqrt{\log n}$.
For odd $r$, the polynomial exponent is $(3r-1)/4$; for $r=2h$ it is
$3h/2=3r/4$.

\subsection{An upper bound at distance three}
\label{subsec:distance-three-counting-upper}

\begin{proposition}\label{prop:distance-three-counting-upper}
For every fixed integer $q\ge3$,
\[
 T_q(n,3)\le
 \left(
 \frac{1+4(q-1)^3/(\max\{q,9\}-1)}{q(2q-3)^2}
 +O_q(n^{-1})
 \right)q^n.
\]
For the ternary alphabet, the following explicit bound holds:
\[
 T_3(n,3)\le
 \frac{5n-1}{27(n-1)}\,3^n
 =
 \left(\frac5{27}+\frac{4}{27(n-1)}\right)3^n
 \qquad(n\ge5).
\]
\end{proposition}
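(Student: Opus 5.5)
The plan is a local-density argument via weighted double counting, in the spirit of Lemma~\ref{lem:face-average}. Instead of averaging over full faces, we average over small configurations assembled from coordinate lines, i.e. sets of $q$ words that differ only in one prescribed coordinate. For each configuration we will show directly that a triangle-free set occupies at most a fixed fraction of it. Since the family of configurations is invariant under the symmetries of $[q]^n$, each word is covered equally often, and the global density is then bounded by that fraction. The $O_q(n^{-1})$ term and the factor $(n-1)$ in the ternary statement suggest that the family is indexed by a base word, one distinguished coordinate, and an auxiliary coordinate chosen among the remaining $n-1$. The error should come from the few configurations where the auxiliary choices collide, or from the different multiplicities with which a word occurs in the distinguished and auxiliary roles.

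The local ingredient is Proposition~\ref{prop:local-type}. Inside the $3$-dimensional face spanned by three coordinates, distance-$3$ triangles are exactly the triples that are pairwise distinct in all three coordinates (type $a=0$, $d=3$). With one extra coordinate, the type $a=1$, $d=1$ also appears. We take a base word $x$ and three coordinates $i,j,k$ and look at the words differing from $x$ only in $\{i,j,k\}$, organised by how many coordinate lines through $x$ they meet. Each $y\in A$ is then charged to triangles of type $a=0$, and we count the pairs (word of $A$, triangle in the configuration) in two ways.
\begin{itemize}
\item If $A$ is dense on the lines through a point, then many ``Latin'' triples are spanned, and one vertex of each must be missing.
\item Each missing word destroys at most a bounded number of such triples.
\end{itemize}
I expect this trade-off to produce a linear inequality. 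The denominator $q(2q-3)^2$ would count the weighted configuration: $q$ choices along one line, and $(2q-3)$ choices in each of two further coordinates after excluding two forbidden symbols. The numerator $1+4(q-1)^3/(\max\{q,9\}-1)$ would bound the contribution from $A$: one term for the base, plus the triangle-free part, where $(q-1)^3$ counts the completions.

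Concretely, the steps are as follows.
\begin{enumerate}
\item Fix the configuration and its weights.
\item Solve the finite optimisation problem of maximising the weighted size of a triangle-free subset of the configuration. For $q=3$ this is a finite check in $[3]^3$ or $[3]^4$, giving the constant $5/27$.
\item Average over the whole cube, tracking the exact multiplicities to obtain $\frac{5n-1}{27(n-1)}$ for $n\ge5$. The hypothesis $n\ge5$ ensures that the type $a=1$ configurations, which need support $r+a=4$ plus an auxiliary coordinate, exist.
\item For general $q$, bound the local maximum by a cruder argument. This is where $\max\{q,9\}-1$ enters: for small $q$ a better local estimate is available, and the maximum simply records which of the two estimates is used.
\end{enumerate}

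The main obstacle is the second step: choosing the weights so that the local extremal problem is both tractable and tight enough to give exactly $5/27$. The first attempt will be to use the $3$-dimensional Latin-triple structure together with one extra line. A pure face count would give only $T_q(3,3)/q^3$, and I expect this to be weaker than the stated bound, which is why the coordinate lines are mixed in.
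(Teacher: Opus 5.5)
Your plan has a genuine gap at its central step (step~2). The framework you describe is a first-moment argument: bound the global density by the largest weighted fraction that a triangle-free set can occupy in a symmetric family of bounded configurations. With configurations in three or four coordinates, it cannot reach $5/27$.

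To see this, suppose every configuration lies in a $k$-dimensional face. Average a maximum triangle-free subset of $[q]^k$ over the point-transitive symmetry group of $[q]^k$. Some image then occupies at least a $T_q(k,3)/q^k$ fraction of the configuration's weight. So no choice of weights beats the bound of Lemma~\ref{lem:face-average}.

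For $q=3$ this bound is weak. In $[3]^3$ every triangle is all-distinct in the first coordinate, so $\{x_1\in\{0,1\}\}$ is triangle-free and $T_3(3,3)\ge18$. In $\F_3^4$, the set $\{x:\sum_i x_i=0,\ x_1+x_2\in\{0,1\}\}$ is triangle-free, so $T_3(4,3)\ge18$. Indeed, the $\sigma$-fiber excludes mixed triangles, exactly as in the proof of Theorem~\ref{thm:ternary-r3}. The remaining lines have directions $\pm(e_i+e_j+e_k)$, and $x_1+x_2$ is nonzero on all four of them. Hence your finite check can give at best $2/3$ or $2/9$, never $5/27$.

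Relatedly, a symmetric family covers every word equally often. So there are no multiplicities left to produce the factor $(5n-1)/(n-1)$. Note that the stated bound equals the trivial $9$-clique (tetracode-coset) bound $2/9$ exactly at $n=5$, and drops below it only as $n$ grows. This signals a global effect, not a local one.

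Your readings of the constants are also off. The factor $2q-3$ is not a count of symbols after excluding two, and $\max\{q,9\}$ is the clique number of $H_q(n,3)$. Likewise, $n\ge5$ is not needed for existence of mixed triangles.

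The missing idea is a second-moment argument over coordinate lines. For each line $\ell$, let $s_\ell=|A\cap\ell|$, and let $t_\ell$ count the points of $A$ at distance one from $\ell$.

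\begin{itemize}
\item Counting ordered pairs by distance expresses $\sum s_\ell^2$, $\sum s_\ell t_\ell$ and $\sum t_\ell^2$ through $|A|$ and the numbers $e_1,e_2,e_3$ of pairs of $A$ at distances $1,2,3$.
\item The combination $\sum_\ell\bigl(t_\ell-\tfrac{n+q-4}{2}s_\ell\bigr)^2$ cancels $e_2$ and has coefficient $-\tfrac32n^2+O_q(n)$ on $e_1$. For $q=3$ this coefficient is exactly $\tfrac32(n-1)(5-n)$, which is why $n\ge5$ appears.
\item Triangle-freeness enters only as a degree bound. Each isometric copy of a maximum clique through $x\in A$ contains at most one further point of $A$, so $2e_3\le(q-1)^3\binom n3|A|/(\max\{q,9\}-1)$.
\item This gives an upper bound linear in $|A|$, with leading term of order $n^3$.
\item Cauchy--Schwarz over the $nq^{n-1}$ lines gives the lower bound $\bigl(\tfrac{q(2q-3)^2}{4}n^3+O_q(n^2)\bigr)|A|^2/q^n$. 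Here $2q-3$ comes from $(n-1)(q-1)-\tfrac{n+q-4}{2}\approx\tfrac{(2q-3)n}{2}$.
\end{itemize}

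Comparing the two bounds proves the proposition. The quadratic dependence on $|A|$ is exactly what a local-density argument cannot supply.
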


\begin{proof}
Let $A\subseteq[q]^n$ be nonempty and triangle-free in $H_q(n,3)$,
and let $e_j(A)$ count its unordered pairs at distance $j$.
For $n\ge4$, the graph contains both the $q$-clique
$\{(u,u,u,0,\ldots,0):u\in[q]\}$ and the $9$-clique
\[
 \{(u,v,u+v,u-v,0,\ldots,0):u,v\in\mathbb F_3\},
\]
embedded into three alphabet symbols. Averaging isometric copies of
the larger clique through any $x\in A$ gives
\begin{equation}\label{eq:distance-three-clique-count}
 2e_3(A)\le
 \frac{(q-1)^3\binom n3}{\max\{q,9\}-1}|A|.
\end{equation}
Indeed, each copy contains at most one further point of $A$, and the
isometries fixing $x$ act transitively on its distance-three neighbours.

For each coordinate line $\ell$, obtained by fixing all but one
coordinate, let $s_\ell$ count the points of $A$ on $\ell$, and let
$t_\ell$ count those at distance one from $\ell$.
There are $nq^{n-1}$ such lines, and
\[
 \sum_\ell s_\ell=n|A|,
 \qquad
 \sum_\ell t_\ell=n(n-1)(q-1)|A|.
\]
Counting pairs according to their distance gives
\begin{align*}
 \sum_\ell s_\ell^2
 &=n|A|+2e_1(A),\\
 \sum_\ell s_\ell t_\ell
 &=2(n-1)e_1(A)+4e_2(A),\\
 \sum_\ell t_\ell^2
 &=n(n-1)(q-1)|A|+2(n-1)(2q-3)e_1(A)\\
 &\qquad+4(n+q-4)e_2(A)+12e_3(A).
\end{align*}
In particular, two points at distance three have exactly six
coordinate lines at distance one from both: choose one of their
three differing coordinates as the line direction, then one of
two assignments on the remaining differing coordinates.

Expanding the square below cancels the $e_2(A)$ term and gives the 
coefficient $-3n^2/2+O_q(n)$ for $e_1(A)$.
Thus, for sufficiently large $n$, \eqref{eq:distance-three-clique-count}
yields
\begin{equation}\label{eq:distance-three-square-upper}
 \sum_\ell\left(t_\ell-\frac{n+q-4}{2}s_\ell\right)^2
 \le
 \left[
 \left(\frac14+\frac{(q-1)^3}{\max\{q,9\}-1}\right)n^3
 +O_q(n^2)
 \right]|A|.
\end{equation}
On the other hand, Cauchy--Schwarz and the first-moment identities give
\begin{align*}
 \sum_\ell\left(t_\ell-\frac{n+q-4}{2}s_\ell\right)^2
 &\ge
 \frac{n\bigl((n-1)(q-1)-(n+q-4)/2\bigr)^2}{q^{n-1}}|A|^2\\
 &=\left(\frac{q(2q-3)^2}{4}n^3+O_q(n^2)\right)
   \frac{|A|^2}{q^n}.
\end{align*}
Combining this with \eqref{eq:distance-three-square-upper} proves the general bound. For $q=3$, the same calculation gives an explicit bound.
Indeed, the exact moment identities yield
\begin{align*}
 \sum_\ell\left(t_\ell-\frac{n-1}{2}s_\ell\right)^2
 &=
 \frac{n(n-1)(n+7)}4\,|A|
 +\frac32(n-1)(5-n)e_1(A)
 +12e_3(A).
\end{align*}
For $n\ge5$, the coefficient of $e_1(A)$ is nonpositive, while
\eqref{eq:distance-three-clique-count} gives
\[
 12e_3(A)\le n(n-1)(n-2)|A|.
\]
Consequently, Cauchy--Schwarz and the first-moment identities give
\[
 \frac{27n(n-1)^2}{4}\frac{|A|^2}{3^n}
 \le
 \sum_\ell\left(t_\ell-\frac{n-1}{2}s_\ell\right)^2
 \le
 \frac{n(n-1)(5n-1)}4\,|A|.
\]
Dividing proves the explicit ternary bound.
\end{proof}

The clique input above is optimal: the classification of equidistant
distance-three codes by Bogdanova and Katona~\cite[Theorem~2.4]{bogdanova-katona}
implies
\[
 \omega(H_q(n,3))=\max\{q,9\}
 \qquad(q\ge3,\ n\ge4).
\]
Our proof uses only the explicit cliques, but their maximality shows
that the clique-averaging estimate
\eqref{eq:distance-three-clique-count} cannot be strengthened
by choosing a larger clique.

\section{Small distances and BCH constructions}\label{sec:small-r}

\subsection{The cases \texorpdfstring{$r=1$ and $r=2$}{r=1 and r=2}}

We start by proving Theorem~\ref{thm:r1r2}. At unit distance, the answer comes from
the geometry of coordinate lines.

\begin{proof}[Proof of Theorem~\ref{thm:r1r2} for $r=1$]
Fix a coordinate $i$ and all entries outside that coordinate. The resulting
coordinate line is a copy of $K_q$ in $H_q(n,1)$, and a triangle-free set
contains at most two of its points. The $q^{n-1}$ parallel coordinate lines
partition the cube, so $T_q(n,1)\le2q^{n-1}$.

For the reverse inequality, identify the alphabet with $\mathbb Z_q$
and take
\[
 A=\left\{x\in(\mathbb Z_q)^n:
            \sum_{i=1}^n x_i\in\{0,1\}\right\}.
\]
Every coordinate line meets $A$ in exactly two points. Hence $A$ is
triangle-free and $|A|=2q^{n-1}$.
\end{proof}

At distance two we use a checksum whose coefficients avoid all bounded
two-term relations. The construction works over any alphabet and produces
an independent set.

\begin{proof}[Proof of Theorem~\ref{thm:r1r2} for $r=2$]
Let $D_q=\{\pm1,\ldots,\pm(q-1)\}$. Choose a prime
$p=\Theta_q(n)$, with the implicit constant sufficiently large. On
$\F_p^\times$, join distinct $u,v$ whenever
\[
       c_1u+c_2v=0
       \qquad\text{for some }c_1,c_2\in D_q.
\]
This graph has maximum degree $O_q(1)$, because $u/v$ belongs to the fixed
set $\{-c_2/c_1:c_1,c_2\in D_q\}$. It therefore has an
independent set of order $\Omega_q(p)$; our choice of the implicit constant
allows us to select distinct elements $w_1,\ldots,w_n$ from it.

Label the alphabet by $0,1,\ldots,q-1$ and color $x\in[q]^n$ by
\[
       \chi(x)=\sum_{i=1}^n w_ix_i\pmod p.
\]
If equally colored words differed in exactly two coordinates $i,j$, then
$c_1w_i+c_2w_j=0$ for some $c_1,c_2\in D_q$, contrary to the
choice of the weights. Thus every color class is independent in $H_q(n,2)$.
There are $p=O_q(n)$ classes, so the largest has size
$\Omega_q(q^n/n)$. The matching upper bound follows from
Proposition~\ref{prop:binary-transfer} with $h=1$.
\end{proof}

\subsection{Extended BCH constructions over prime-power alphabets}

The following construction uses the standard parity-check equations
for extended Bose--Chaudhuri--Hocquenghem (BCH) codes,
see also~\cite{yekhanin-dumer} for the fixed-distance coding context.
We include the argument to record its consequences for triangle-free
induced subgraphs at every fixed distance over a prime-power alphabet.
A Vandermonde system excludes differences of small weight, while the
Frobenius identity reduces the number of moments that must be recorded.

\begin{proposition}[Prime-power BCH construction]\label{prop:prime-power-bch}
Let $q\ge2$ be a prime power and let $n\ge r\ge2$ be integers. Put
\[
 e=\left\lfloor\frac{(q-1)r}{q}\right\rfloor.
\]
There is a proper coloring of $H_q(n,r)$ with fewer than
$q^{e+1}n^e$ colors whose nonempty color classes have minimum
Hamming distance at least $r+1$. Consequently,
\begin{equation}\label{eq:prime-power-bch}
 T_q(n,r)>\frac{2}{q^{e+1}}\frac{q^n}{n^e}.
\end{equation}
\end{proposition}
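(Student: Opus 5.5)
Identify the alphabet with $\F_q$ and choose $m$ minimal with $q^m\ge n$, so $q^m<qn$. Fix distinct elements $\beta_1,\ldots,\beta_n\in\F_{q^m}$. The plan is to colour $x\in\F_q^n$ by the syndrome vector
\[
 \sigma(x)=\Bigl(\sum_i x_i,\ \bigl(\textstyle\sum_i x_i\beta_i^{j}\bigr)_{j\in J}\Bigr),
\]
where $J$ is the set of exponents $1\le j\le r$ with $q\nmid j$.

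The first step is the Vandermonde property. For the extended BCH code with all moments $j=0,1,\ldots,r$ recorded, i.e.\ the system $\sum_i c_i\beta_i^j=0$ for $0\le j\le r$, the kernel has minimum distance at least $r+2$. Indeed, a nonzero $c$ supported on $s\le r+1$ coordinates would give a singular $(r+1)\times s$ Vandermonde submatrix with distinct nodes, which is impossible. We only need distance $r+1$, so the moments $j\le r-1$ suffice, and the moment $j=0$ is kept for convenience.

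The second step is the Frobenius reduction. Since $c_i\in\F_q$, we have $\sum_i c_i\beta_i^{qj}=\bigl(\sum_i c_i\beta_i^{j}\bigr)^q$. Hence vanishing of the moment $j$ implies vanishing of the moment $qj$, and only the exponents in $[1,r]$ that are not divisible by $q$ need to be recorded. Their number is $r-\lfloor r/q\rfloor=\lceil (q-1)r/q\rceil$.

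Here is the main obstacle. I must reach exponent $e=\lfloor (q-1)r/q\rfloor$, which is one less than this count whenever $q\nmid r$. First I will try to record only moments up to $r-1$ together with the $j=0$ moment. That gives $|J|=r-1-\lfloor (r-1)/q\rfloor$, which equals $\lfloor (q-1)r/q\rfloor$ when $q\nmid r$ and $r-r/q=e$ when $q\mid r$. This is at most $e$ in all cases, and it still forces kernel distance at least $r+1$ by the Vandermonde step applied to moments $0,\ldots,r-1$.

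The count then follows. The colour takes values in $\F_q\times\F_{q^m}^{|J|}$, so there are at most $q\cdot q^{m|J|}<q\,(qn)^{e}=q^{e+1}n^e$ colours. Two vertices of the same colour have difference in the kernel, so they lie at distance at least $r+1$. Hence each colour class is independent in $H_q(n,r)$, and the colouring is proper.

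The second-to-last step is the improvement to the factor $2$. Let $C$ be a colour class of size at least $q^n/(\#\text{colours})$, and form the union of the two classes $\sigma^{-1}(s)$ and $\sigma^{-1}(s')$ whose syndromes differ only in the $j=0$ coordinate, by values $0$ and $1$. This is the mechanism of the $r=1$ proof. Any triangle inside this union has two vertices in the same class, which is impossible at distance $r$. So the union is triangle-free.

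Averaging over the pairs of $j=0$ values gives, by pigeonhole, a union of size at least $2q^n/(\#\text{colours})>2q^n/(q^{e+1}n^e)$. This yields \eqref{eq:prime-power-bch}.
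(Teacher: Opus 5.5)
Your proposal is correct and, after your self-correction to $J=\{1\le j\le r-1: q\nmid j\}$ (whose size is in fact exactly $e$), it is essentially the paper's proof: the same field $\F_{q^m}$ with $q^m<qn$, the Frobenius reduction, the Vandermonde argument on moments $0,\dots,r-1$, and the colour count $q\,q^{me}<q^{e+1}n^e$. The only difference is cosmetic and concerns the factor $2$. The paper uses $\F_q$-linearity of the colouring: all nonempty fibres have equal size, and there are at least $q\ge2$ of them. Your averaging over pairs of classes differing only in the $j=0$ coordinate also works, because $\sum_i x_i$ takes each of the values $0$ and $1$ exactly $q^{n-1}$ times; it would be worth stating that fact explicitly.
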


\begin{proof}
Let $q^m$ be the smallest power of $q$ at least $n$, and set
$K=\F_{q^m}$. Thus $n\le |K|<qn$. Choose distinct elements
$a_1,\ldots,a_n\in K$, and put
\[
 J=\{j\in\{1,\ldots,r-1\}:q\nmid j\}.
\]
Then
\[
 |J|=r-1-\left\lfloor\frac{r-1}{q}\right\rfloor
     =\left\lfloor\frac{(q-1)r}{q}\right\rfloor=e.
\]
Identify the alphabet with $\F_q$. For $x\in\F_q^n$, write
\[
 S_0(x)=\sum_{i=1}^n x_i\in\F_q,
 \qquad S_j(x)=\sum_{i=1}^n a_i^jx_i\in K\quad(j\ge1),
\]
and color $x$ by
\[
 \Phi(x)=\bigl(S_0(x),(S_j(x))_{j\in J}\bigr)
          \in\F_q\times K^J.
\]
Suppose distinct vertices $x,y$ have the same color, and put $u=x-y$.
Since $u_i\in\F_q$, the Frobenius identity gives
$S_{qj}(u)=S_j(u)^q$ for every $j\ge1$.
Every positive integer at most $r-1$ is $q^t j$ for some $j\in J$,
so equality of colors implies
\[
 S_j(u)=0\qquad(0\le j\le r-1).
\]
If $w=\operatorname{wt}(u)\le r$ and
$\operatorname{supp}(u)=\{i_1,\ldots,i_w\}$, the first $w$ equations are
\[
 \sum_{\ell=1}^w a_{i_\ell}^{\,j}u_{i_\ell}=0
 \qquad(0\le j\le w-1),
\]
with all coefficients equal to one in the equation for $j=0$.
Their Vandermonde determinant is
$\prod_{s<t}(a_{i_t}-a_{i_s})\ne0$, forcing $u=0$, a contradiction.
Every nonempty fiber therefore has minimum distance at least $r+1$.

There are at most $q|K|^e<q^{e+1}n^e$ colors.
The map $\Phi$ is $\F_q$-linear, so its nonempty fibers have equal
size, at least $q^n/(q|K|^e)$. Its first coordinate is surjective,
so there are at least $q\ge2$ nonempty fibers. The union of two
distinct fibers induces a bipartite graph and has size at least
\[
 \frac{2q^n}{q|K|^e}>\frac{2}{q^{e+1}}\frac{q^n}{n^e},
\]
which proves \eqref{eq:prime-power-bch}.
\end{proof}

If $n$ is a power of $q$, taking $|K|=n$ improves the constant to
$T_q(n,r)\ge 2q^n/(qn^e)$.
At ternary distance four the construction matches the binary-subcube
upper bound, proving Theorem~\ref{thm:ternary-r4}.

\begin{corollary}[Ternary distance four]\label{cor:ternary-r4}
As $n\to\infty$,
\[
 T_3(n,4)=\Theta\!\left(\frac{3^n}{n^2}\right).
\]
\end{corollary}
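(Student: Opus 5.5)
The plan is to obtain the two bounds separately from results already established. Both are direct specializations, so the work is only in checking parameters.

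For the upper bound, apply Proposition~\ref{prop:binary-transfer} with $h=2$. Concretely, Lemma~\ref{lem:alphabet-restriction} with $p=2$ gives $T_3(n,4)\le(3/2)^nT_2(n,4)$. The Balogh--Chen--Li estimate \eqref{eq:BCL}, taken with $s=3$ and $h=2$, gives $T_2(n,4)=O(2^n/n^2)$. Together these yield $T_3(n,4)=O(3^n/n^2)$.

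For the lower bound, apply Proposition~\ref{prop:prime-power-bch} with $q=3$ and $r=4$, noting that $3$ is prime. Then
\[
 e=\left\lfloor\frac{(q-1)r}{q}\right\rfloor=\left\lfloor\frac{8}{3}\right\rfloor=2.
\]
This exponent comes from the index set $J=\{1,2\}$: the moment $S_3$ is not recorded because $S_3(u)=S_1(u)^3$ by the Frobenius identity. Inequality \eqref{eq:prime-power-bch} then gives, for every $n\ge4$,
\[
 T_3(n,4)>\frac{2}{27}\,\frac{3^n}{n^2}.
\]

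Combining the two bounds gives $T_3(n,4)=\Theta(3^n/n^2)$. There is no real obstacle. The one point to check is that $e$ drops from the naive $r-1=3$ to $2$, since with $e=3$ the lower bound would be only of order $3^n/n^3$.
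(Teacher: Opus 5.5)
Your proposal is correct and matches the paper's proof: the lower bound $T_3(n,4)>2\cdot 3^n/(27n^2)$ comes from Proposition~\ref{prop:prime-power-bch} with $q=3$, $r=4$, $e=2$, and the upper bound comes from Proposition~\ref{prop:binary-transfer} with $h=2$ (alphabet restriction to $p=2$ combined with \eqref{eq:BCL}). Your check that $J=\{1,2\}$, so that $S_3$ is recovered by Frobenius and $e=2$ rather than $3$, is exactly the point that makes the two bounds meet.
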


\begin{proof}
Taking $q=3$ and $r=4$ in Proposition~\ref{prop:prime-power-bch} gives
$T_3(n,4)>2\cdot3^n/(27n^2)$ for $n\ge4$.
The matching upper bound follows from
Proposition~\ref{prop:binary-transfer}.
\end{proof}

\subsection{Excluding a fixed clique}

The averaging and coloring arguments also apply when a larger fixed
clique is forbidden.

\begin{corollary}\label{cor:fixed-clique-even-distances}
For every fixed $h\ge1$, $s\ge2$, and integer $q\ge2$,
\[
 \alpha_s(H_q(n,2h))=O_{h,s}\!\left(\frac{q^n}{n^h}\right).
\]
For every fixed $s\ge2$ and $q\ge2$,
\[
 \alpha_s(H_q(n,2))=\Theta_{q,s}\!\left(\frac{q^n}{n}\right),
 \qquad
 \alpha_s(H_3(n,4))=\Theta_s\!\left(\frac{3^n}{n^2}\right).
\]
More generally, for every fixed prime power $q$ and fixed $r,s\ge2$,
\[
 \alpha_s(H_q(n,r))=
 \Omega_{q,r,s}\!\left(\frac{q^n}{n^{\lfloor(q-1)r/q\rfloor}}\right).
\]
\end{corollary}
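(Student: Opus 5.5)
The corollary has three parts: an upper bound for even distances, exact orders for distance two and for ternary distance four, and a general lower bound over prime-power alphabets. I would prove the upper bounds by the averaging lemmas and the lower bounds by the colorings already constructed, using that a $K_s$-free set contains a triangle-free (indeed independent or bipartite) set as a special case.

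\textbf{Upper bound.} Lemma~\ref{lem:alphabet-restriction} was proved for triangle-free sets, but its proof uses nothing about triangles. It only needs the following fact: the intersection of a $K_s$-free set in $H_q(n,r)$ with a random isometric copy of $[p]^n$ is $K_s$-free in $H_p(n,r)$. This holds because the copy is isometric, so induced subgraphs of the exact-distance graph are preserved. Taking $p=2$ gives
\[
\alpha_s(H_q(n,r))\le (q/2)^n\,\alpha_s(H_2(n,r)).
\]
Combining this with \eqref{eq:BCL} at $r=2h$ yields $\alpha_s(H_q(n,2h))=O_{h,s}(q^n/n^h)$.

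\textbf{Lower bounds.} For $s\ge3$, any triangle-free set is $K_s$-free, so
\[
\alpha_s\ge\alpha_3=T_q(n,r).
\]
For $s=2$ we need a genuinely independent set. For distance two, the proof of Theorem~\ref{thm:r1r2} already gives color classes that are independent in $H_q(n,2)$, of size $\Omega_q(q^n/n)$; this covers every $s\ge2$ at once. For a prime power $q$ and $r\ge2$, Proposition~\ref{prop:prime-power-bch} gives a proper coloring of $H_q(n,r)$ with fewer than $q^{e+1}n^e$ colors. A single color class is then independent and has size at least $q^n/(q^{e+1}n^e)$, which gives
\[
\alpha_s(H_q(n,r))=\Omega_{q,r,s}\bigl(q^n/n^{\lfloor (q-1)r/q\rfloor}\bigr)
\]
for all $s\ge2$. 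Specializing to $q=3$, $r=4$ gives $e=2$ and hence $\Omega(3^n/n^2)$. Together with the upper bound for $h=2$, this yields $\alpha_s(H_3(n,4))=\Theta_s(3^n/n^2)$. Similarly, $h=1$ gives the matching upper bound at distance two.

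The main point needing care is that the $s=2$ case needs actual independent sets rather than the union of two fibers used in \eqref{eq:prime-power-bch}. This is why I invoke the proper-coloring statements directly rather than the triangle-free bounds. Beyond that, the only check is that \eqref{eq:BCL} holds for every fixed $s\ge2$, which is exactly how it is stated.
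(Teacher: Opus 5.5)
Your proposal is correct and follows the paper's proof. The upper bound comes from averaging over random coordinatewise binary subcubes (the proof of Lemma~\ref{lem:alphabet-restriction} applied verbatim to $K_s$-free sets) together with \eqref{eq:BCL}, and the lower bounds come from the proper colorings in Theorem~\ref{thm:r1r2} and Proposition~\ref{prop:prime-power-bch}. The only difference is cosmetic: the paper takes a union of up to $s-1$ color classes, whereas you take a single independent class, which changes only the constant.
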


\begin{proof}
Let $A\subseteq[q]^n$ induce a $K_s$-free subgraph of $H_q(n,2h)$.
Intersecting $A$ with a uniformly random coordinatewise binary
subcube and taking expectations gives
\[
 \left(\frac2q\right)^n|A|\le\alpha_s(H_2(n,2h)).
\]
The upper bound follows from \eqref{eq:BCL}.
For the lower bounds, the union of at most $s-1$ classes of a
proper coloring is $K_s$-free. The distance-two checksum uses
$O_q(n)$ colors, and Proposition~\ref{prop:prime-power-bch} uses
$O_{q,r}(n^{\lfloor(q-1)r/q\rfloor})$ colors. Taking their largest
classes proves the lower bounds. For $q=3$ and $r=4$, the latter
exponent is two, matching the upper bound.
\end{proof}


\section{Distance \texorpdfstring{$r=3$, $q$}{r=3, q}-ary construction}
\label{sec:checksum}
We prove Theorem~\ref{thm:checksum} by constructing coordinate
weights with no bounded-coefficient relation involving three
distinct indices. The main ingredient is the limited-ratio-free set construction
of Erd\H{o}s, Frankl, and R\"odl \cite{erdos-frankl-rodl},
in the formulation of \cite[Theorem~2.5]{kovacs-nagy};
Alon and Shapira \cite{alon-shapira} developed a more general
framework for avoiding systems of linear equations. A translation
eliminates relations whose coefficients have nonzero sum, and a
suitable modulus preserves the resulting avoidance property.
The weighted coordinate sum then gives a proper coloring of
$H_q(n,3)$.

\begin{theorem}[Alon--Shapira, limited-ratio form]
\label{thm:limited-ratio}
For every fixed positive integer $h$, there is a constant $c_h>0$ such that
for every $N\ge2$ there is a set $B\subseteq\{1,\ldots,N\}$ satisfying
\[
 |B|\ge N\exp(-c_h\sqrt{\log N})
\]
and containing no three distinct elements $a,b,c$ for which
\[
       \lambda a+\mu b=(\lambda+\mu)c
       \qquad(1\le\lambda,\mu\le h).
\]
\end{theorem}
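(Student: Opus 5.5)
This statement is a Behrend-type result, so I will prove it by adapting Behrend's construction of sets without three-term progressions to a weighted, bounded-ratio setting. The plan is to take integers $N$ whose base-$b$ digits all lie in $\{0,1,\ldots,d-1\}$, with $d$ small compared to $b$; concretely I will take $b>2hd$ so that no carries occur. The key sum is $\lambda a+\mu b$, where $\lambda,\mu\le h$. Each of its digits equals $\lambda a_i+\mu b_i<2hd\le b$, so this combination is computed digitwise. The same holds for $(\lambda+\mu)c$, whose digits are $(\lambda+\mu)c_i<2hd$.

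Hence the relation $\lambda a+\mu b=(\lambda+\mu)c$ among such integers forces, for every digit position $i$,
\[
 \lambda a_i+\mu b_i=(\lambda+\mu)c_i.
\]
In vector form this reads $c=\frac{\lambda}{\lambda+\mu}a+\frac{\mu}{\lambda+\mu}b$, so $c$ is a strict convex combination of the digit vectors $a,b\in\{0,\ldots,d-1\}^k$.

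Next I restrict to digit vectors lying on a common sphere, that is, with $\sum_i a_i^2=R$ fixed. By strict convexity of the Euclidean norm, a strict convex combination of two distinct vectors of norm $\sqrt R$ has norm strictly less than $\sqrt R$. Therefore $c$ cannot lie on the same sphere unless $a=b$, in which case $c=a$ as well. This contradicts the assumption that $a,b,c$ are distinct, so any such sphere gives a set with no solutions.

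Pigeonhole over the at most $kd^2$ possible values of $R$ gives a sphere containing at least $d^k/(kd^2)$ points. Each resulting integer is less than $b^k\le (2hd+1)^k$. I will choose $k\approx\sqrt{\log N}$ and $d$ with $(2hd+1)^k\le N$, that is $d\approx N^{1/k}/(2h+1)$. Then
\[
 |B|\ge \frac{d^k}{kd^2}\ge \frac{N}{(2h+1)^k\cdot kd^2}.
\]
This needs the factor $kd^2$ to be absorbed. With $d=\exp(\log N/k)$ and $k=\sqrt{\log N}$ we get $d^2=\exp(2\sqrt{\log N})$ and $(2h+1)^k=\exp(O_h(\sqrt{\log N}))$. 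Altogether
\[
 |B|\ge N\exp\bigl(-c_h\sqrt{\log N}\bigr),
\]
as required.

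For small $N$ the inequality holds after adjusting $c_h$, since taking a single element or $B=\{1\}$ is trivially valid when $N\exp(-c_h\sqrt{\log N})\le1$. The shift ensuring $B\subseteq\{1,\ldots,N\}$ (avoiding $0$) is harmless: I will add $1$ to every element. This preserves the relation, because the coefficients satisfy $\lambda+\mu=(\lambda+\mu)$, so a common translate cancels on both sides.

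The main obstacle is only the bookkeeping of parameters so that the loss $(2h+1)^k\,k\,d^2$ stays $\exp(O_h(\sqrt{\log N}))$. This requires choosing $k$ and $d$ as integers with $d\ge2$ and $(2hd+1)^k\le N$, and handling rounding. Since all of these are standard Behrend-style estimates, I would cite the formulation in \cite{kovacs-nagy} or \cite{alon-shapira} for the final constant-tracking if needed.
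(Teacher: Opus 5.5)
Your proof is correct. The paper itself gives no proof of this statement. It imports it by citation (the limited-ratio-free sets of Erd\H{o}s, Frankl and R\"odl in the formulation of Kov\'acs and Nagy, with Alon and Shapira for the general framework) and uses it only as a black box in Lemma~\ref{lem:weights}.

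What you supply is the standard Behrend sphere construction on which those results rest, so your route is the natural self-contained one. Its key feature, which you identify correctly, works in two steps. Choosing the base larger than $2h(d-1)$ prevents carries, so $\lambda a+\mu b=(\lambda+\mu)c$ becomes the digitwise identity $c=\frac{\lambda}{\lambda+\mu}a+\frac{\mu}{\lambda+\mu}b$. Strict convexity of the sphere then excludes every ratio $\lambda/\mu$ at once. The bound $h$ enters only through the base, at a cost of $(2h+1)^k=\exp(O_h(\sqrt{\log N}))$. The citation buys brevity, while your argument makes Section~\ref{sec:checksum} self-contained and gives an explicit $c_h$.

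Two bookkeeping points need repair when you write it out.

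\textbf{Overloaded letter.} You use $b$ both for the base and for an element of $B$; rename one of them.

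\textbf{Rounding of $d$.} The step $d^k/(kd^2)\ge N/((2h+1)^k kd^2)$ requires $d^k\ge N/(2h+1)^k$, which can fail once $d$ is rounded to an integer. Take $k=\lceil\sqrt{\log N}\rceil$ and $d=\lfloor N^{1/k}/(2h+1)\rfloor$, with base $2hd+1\le(2h+1)d$. Then after the shift by $1$ all elements lie in $\{1,\ldots,((2h+1)d)^k\}\subseteq\{1,\ldots,N\}$. Bernoulli's inequality gives
\[
 d^k\ge\frac{N}{(2h+1)^k}\bigl(1-k(2h+1)N^{-1/k}\bigr)\ge\frac{N}{2(2h+1)^k}
\]
for all large $N$, because $kN^{-1/k}\to0$. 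Also $d^2\le N^{2/k}\le e^{2\sqrt{\log N}}$.

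With these adjustments, together with your treatment of small $N$ and of the shift by $1$, the proof is complete. There is no need to defer the constant-tracking to the references.
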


\begin{lemma}[Coefficient-avoiding weights]\label{lem:weights}
Let $D_q=\{\pm1,\ldots,\pm(q-1)\}$. For every fixed $q\ge2$ there is a constant $C_q>0$ with the following
property. For every $n\ge2$ one can find an integer
\[
       M\le n\exp(C_q\sqrt{\log n})
\]
and distinct residues $w_1,\ldots,w_n\in\mathbb Z/M\mathbb Z$ such that
\[
       c_1w_i+c_2w_j+c_3w_k\not\equiv0\pmod M
\]
whenever $i,j,k$ are distinct and
$c_1,c_2,c_3\in D_q$.
\end{lemma}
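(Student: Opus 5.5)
Following the roadmap announced at the start of this section, we take $w_i$ to be translates of elements of a limited-ratio-free set and reduce modulo a large integer $M$. Put $h=q-1$ and let $N$ be a parameter, chosen below to satisfy $N\exp(-c_h\sqrt{\log N})\ge n$. Theorem~\ref{thm:limited-ratio} gives $B\subseteq\{1,\ldots,N\}$ with $|B|\ge n$ and no three distinct $a,b,c\in B$ satisfying $\lambda a+\mu b=(\lambda+\mu)c$ with $1\le\lambda,\mu\le h$. Pick $n$ distinct elements $b_1,\ldots,b_n\in B$. Set $w_i=b_i+L$ for a large integer translation $L$ (say $L=3hN$), and let $M$ be an integer exceeding $3h(N+L)$. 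Then $M=O_q(N)$, and $N=n\exp(O_q(\sqrt{\log n}))$ suffices, which gives the stated bound on $M$.

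We now check that no relation occurs. Since $M>\sum|c_t|\,|w_t|$, a congruence $c_1w_i+c_2w_j+c_3w_k\equiv0\pmod M$ forces the integer equation $c_1w_i+c_2w_j+c_3w_k=0$.

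\emph{Case $s:=c_1+c_2+c_3\ne0$.} Substituting $w=b+L$ gives $c_1b_i+c_2b_j+c_3b_k=-sL$. The left side has absolute value at most $3hN$, while the right side has absolute value at least $L=3hN$. Taking $L$ slightly larger, say $L=3hN+1$, makes this impossible.

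\emph{Case $s=0$.} The relation becomes $c_1b_i+c_2b_j+c_3b_k=0$ with nonzero coefficients summing to zero. Among three nonzero integers with zero sum, exactly two share a sign. Relabel so that $c_1,c_2$ share a sign, and multiply by $-1$ if necessary so that $c_1,c_2>0$. Then $c_3=-(c_1+c_2)$, and the relation reads $c_1b_i+c_2b_j=(c_1+c_2)b_k$ with $1\le c_1,c_2\le h$ and $i,j,k$ distinct. This is forbidden in $B$.

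The residues $w_i$ are distinct because the $b_i$ are distinct and $M>w_i$ for all $i$.

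The main obstacle is the $s=0$ reduction, since it is the only place where the structure of $B$ is used; the sign argument above handles it. The remaining work is bookkeeping: inverting $N\exp(-c_h\sqrt{\log N})\ge n$ gives $N\le n\exp(C\sqrt{\log n})$ because $\log N=\log n+O(\sqrt{\log n})$, and the constant factor $6h+3$ in $M$ is absorbed into $C_q$ for $n\ge2$.
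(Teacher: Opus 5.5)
Your proposal is correct and follows essentially the same route as the paper. Both arguments translate a limited-ratio-free set by $3(q-1)N+1$, so that relations with nonzero coefficient sum are excluded by size, reduce zero-sum relations to the forbidden pattern $\lambda a+\mu b=(\lambda+\mu)c$, and take $M$ larger than every possible three-term sum. One small slip: the constant factor in $M$ is of order $9h^2$, not $6h+3$, but it is harmlessly absorbed into $C_q$.
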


\begin{proof}
Apply Theorem~\ref{thm:limited-ratio} with $h=q-1$. Increasing the implicit
constant if necessary, choose
\[
       N\le n\exp(C'_q\sqrt{\log n})
\]
so that the theorem supplies a limited-ratio-free set
$B\subseteq\{1,\ldots,N\}$ of size at least $n$. Indeed, taking
$N=\lceil n\exp(3c_{q-1}\sqrt{\log n})\rceil$ suffices for all large $n$,
and the remaining values may be absorbed into the constant.

Set
\[
       H=3(q-1)N+1, \qquad W=H+B.
\]
We claim that distinct $u,v,w\in W$ satisfy no integer relation
\begin{equation}\label{eq:integer-relation}
       c_1u+c_2v+c_3w=0,
       \qquad c_1,c_2,c_3\in D_q.
\end{equation}
Write $u=H+a$, $v=H+b$, and $w=H+c$. If
$c_1+c_2+c_3=0$, then after permuting the variables and multiplying
by $-1$ if necessary, \eqref{eq:integer-relation} has the form
\[
       \lambda u+\mu v=(\lambda+\mu)w,
       \qquad 1\le\lambda,\mu\le q-1.
\]
The translation by $H$ cancels and produces a forbidden relation among
$a,b,c\in B$. If instead $s:=c_1+c_2+c_3\ne0$, then
\[
 |sH|\ge H
 \quad\text{but}\quad
 |c_1a+c_2b+c_3c|\le3(q-1)N<H,
\]
which again rules out \eqref{eq:integer-relation}.

Choose $n$ distinct elements of $W$, represented by positive integers, and
take $ M=3(q-1)(H+N)+1$.
Every relevant three-term sum has absolute value less than $M$; congruence
to zero modulo $M$ would therefore imply equality over the integers. The
chosen residues have the desired property, and
$M=O_q(N)\le n\exp(C_q\sqrt{\log n})$ after adjusting $C_q$.
\end{proof}

\begin{proof}[Proof of Theorem~\ref{thm:checksum}]
Take $M,w_1,\ldots,w_n$ from Lemma~\ref{lem:weights} and define
\[
 \chi:[q]^n\longrightarrow\mathbb Z_M,
 \qquad
 \chi(x)=\sum_{i=1}^n w_ix_i\pmod M.
\]
If $x$ and $y$ differ in exactly the three coordinates $i,j,k$ and have the
same color, then
\[
 (x_i-y_i)w_i+(x_j-y_j)w_j+(x_k-y_k)w_k\equiv0\pmod M.
\]
The three coefficients belong to $D_q$, contradicting
Lemma~\ref{lem:weights}. Hence $\chi$ is a proper coloring of
$H_q(n,3)$. At least one of its at most $M$ color classes has size at least
\[
 \frac{q^n}{M}
 \ge
 \frac{q^n}{n\exp(C_q\sqrt{\log n})},
\]
and this class is independent.
\end{proof}

The proof gives a proper coloring of $H_q(n,3)$, so every color
class is independent. In the next section we exploit the greater
freedom of requiring the color classes to induce triangle-free
subgraphs, using algebraic features specific to the ternary
alphabet.

\section{The ternary distance-three problem and Ramsey theory}
\label{sec:ternary-distance-three}

We now exploit features special to $\F_3$.  Proposition
\ref{prop:local-type} leaves two types at distance three.  For $a=0$ and
$d=3$, the triangle is an affine line
\[
       x,\quad x+u,\quad x-u,
       \qquad \operatorname{wt}(u)=3.
\]
The second type has $(a,d)=(1,1)$.  A linear statistic will eliminate the
mixed type, while a quadratic statistic will handle the affine lines.
Recall the cap-size function $M(m)$ and the constants $\kappa_3$
and $\alpha$ from \eqref{eq:cap-capacity}.
Cartesian products give $M(m+\ell)\ge M(m)M(\ell)$, so Fekete's
lemma yields $\kappa_3=\sup_m M(m)^{1/m}$ and
\begin{equation}\label{eq:cap-capacity-asymptotic}
 M(m)=3^{m(\alpha+o(1))}.
\end{equation}
In characteristic three, the cap condition is equivalent to 
\begin{equation}\label{eq:cap-zero-sum-characterization}
 a+b+c=0,\qquad a,b,c\in C,
 \quad\Longrightarrow\quad a=b=c.
\end{equation}
Indeed, if two of $a,b,c$ agree then all three agree; otherwise the three
points are distinct and form an affine line.

The edge-labeling parameter below is an additive analogue of a multicolor
Ramsey problem, tailored to the quadratic Hamming lift.

\begin{definition}[Zero-sum triangle Ramsey number] For an elementary abelian $3$-group $G$, let $Z(G)$ be the largest integer
$N$ for which the edges $ij\in E(K_N)$ of the complete graph $K_N$ can be labeled by elements
$\gamma_{ij}\in G$ so that
\begin{equation}\label{eq:zero-sum-label}
       \gamma_{ij}+\gamma_{ik}+\gamma_{jk}\ne0
\end{equation}
for every triple of distinct vertices $i,j,k$.
\end{definition} 

We next derive the required estimate for $Z(G)$.  This is the key quantitative ingredient in the distance-three lower bound.  The argument first packs the
group with disjoint cap palettes and then amplifies an ordinary multicolor
Ramsey coloring lexicographically.

\begin{lemma}[Palette amplification]\label{lem:zero-sum-palette}
Let $G$ be an elementary abelian $3$-group containing pairwise disjoint
caps
\[
 P_1,\ldots,P_L,\qquad |P_j|=s.
\]
If the edges of $K_t$ admit an $s$-coloring with no monochromatic triangles,
then
\[
 Z(G)\ge t^L.
\]
\end{lemma}

\begin{proof}
Fix such a coloring $\varphi:E(K_t)\to[s]$ and bijections
$\iota_j:[s]\to P_j$.  Use $[t]^L$ as the vertex set of a complete graph.
For distinct $u,v\in[t]^L$, let $j$ be the first coordinate at which they
differ and label the edge $uv$ by
\begin{equation}\label{eq:lexicographic-zero-sum-label}
 \gamma(u,v)=\iota_j\bigl(\varphi(u_jv_j)\bigr)\in P_j.
\end{equation}

Consider three distinct words $u,v,w$, and let $j$ be the first coordinate
at which they are not all equal.  If $u_j,v_j,w_j$ are pairwise distinct,
then all three edge labels belong to $P_j$.  They are not all equal, since
$\varphi$ has no monochromatic triangle.  By
\eqref{eq:cap-zero-sum-characterization}, their sum therefore cannot be
zero.

Suppose instead that exactly two entries agree, say $u_j=v_j\ne w_j$.
The edges $uw$ and $vw$ receive the same label $a\in P_j$, whereas the
edge $uv$ is decided at a later coordinate $j'>j$ and receives a label
$b\in P_{j'}$.  The palettes are disjoint, so $a\ne b$; hence, in
characteristic three,
\[
 a+a+b=b-a\ne0.
\]
Thus no triangle has label sum zero, proving that $Z(G)\ge |[t]^L|=t^L$.
\end{proof}

It remains to produce sufficiently many disjoint caps. Partitioning $\mathbb F_3^m$ into caps is equivalent to
properly coloring the affine Steiner triple system
$\mathrm{AG}(m,3)$.
The following elementary translate-covering argument is
a special case of the general relationship between
fractional and integral covers.
We include the proof to keep the subsequent construction
self-contained.

\begin{lemma}[Partition into caps]\label{lem:zero-sum-cap-partition}
Let $C\subseteq \F_3^m$ be a cap, put $Q=3^m$, and write
$M=|C|$.  Then $\F_3^m$ can be partitioned into at most
\begin{equation}\label{eq:number-of-cap-parts}
 J\le \left\lceil \frac{Q}{M}(\log Q+1)\right\rceil
\end{equation}
caps. 
\end{lemma}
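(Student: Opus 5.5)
The plan is the standard translate-covering argument: cover $\F_3^m$ by few translates of $C$, then refine the cover into a partition. Two facts make this work. A translate $C+v$ of a cap is again a cap, since translation preserves affine lines. Any subset of a cap is also a cap. So it suffices to find translates $C+v_1,\ldots,C+v_J$ covering $\F_3^m$ with $J$ as in \eqref{eq:number-of-cap-parts}. The partition is then $P_i=(C+v_i)\setminus\bigcup_{k<i}(C+v_k)$, discarding empty parts. Each $P_i$ is a cap, the $P_i$ are disjoint, and their union is $\F_3^m$.

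For the covering, I would use a random choice followed by alteration. Let $J_0=\lceil (Q/M)\log Q\rceil$ and choose $v_1,\ldots,v_{J_0}$ independently and uniformly from $\F_3^m$. A fixed point $x$ lies in $C+v$ exactly when $v\in x-C$, which happens with probability $M/Q$. Hence $x$ is missed by all $J_0$ translates with probability
\[
(1-M/Q)^{J_0}\le e^{-J_0M/Q}\le 1/Q,
\]
so the expected number of uncovered points is at most $1$. Fix a choice of translates with at most that many uncovered points. Each uncovered point $x$ can be covered by one extra translate, for instance $C+(x-c_0)$ for a fixed $c_0\in C$.

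The total number of parts is then at most $J_0+1$, and I need to check this stays within $\lceil (Q/M)(\log Q+1)\rceil$. This holds because $Q/M\ge1$, so
\[
\left\lceil \tfrac QM\log Q\right\rceil+1\le \left\lceil \tfrac QM\log Q+\tfrac QM\right\rceil .
\]
The number of uncovered points is an integer whose expectation is at most $1$, so some choice leaves at most one uncovered point; the single extra translate is then enough.

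If a cleaner constant is wanted, a greedy cover (the Lovász–Stein set-cover bound) gives the same estimate. At each step, choose the translate that covers the most currently uncovered points. By averaging over all $Q$ translates, which cover each point exactly $M$ times, this translate covers at least a $M/Q$ fraction of the uncovered points. After $(Q/M)\log Q$ steps at most $Q/M$ points remain, and covering each with its own translate costs at most $Q/M$ further parts. The only delicate point in the whole argument is this integer-rounding bookkeeping; the rest is routine.
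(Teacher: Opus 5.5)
Your proof is correct and follows the paper's argument: random translates of $C$ cover $\F_3^m$, and assigning each point to the first translate containing it yields a partition into caps. The only difference is bookkeeping. The paper takes $J=\lceil (Q/M)(\log Q+1)\rceil$ translates at once, so the expected number of uncovered points is at most $e^{-1}<1$ and no patching is needed. You instead use $\lceil (Q/M)\log Q\rceil$ translates and then add one more translate to cover the at most one leftover point.
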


\begin{proof}
Choose $J$ independent uniformly random translates of $C$, with $J$ equal
to the right-hand side of \eqref{eq:number-of-cap-parts}.  A fixed element
of $\F_3^m$ is missed by all the translates with probability
\[
 \left(1-\frac{M}{Q}\right)^J
 \le \exp(-JM/Q).
\]
Consequently the expected number of uncovered elements is at most
\[
 Q\exp(-JM/Q)\le e^{-1}<1.
\]
Some collection of $J$ translates therefore covers $\F_3^m$.  Order these
translates and assign every element to the first translate containing it.
The resulting nonempty parts are pairwise disjoint subsets of translates of
$C$, hence are caps, and they partition $\F_3^m$.
\end{proof}

\begin{lemma}[Equal cap palettes]\label{lem:equal-cap-palettes}
Fix $0<\beta<\alpha$.  For $Q=3^m$ and
\[
 s=\lfloor Q^\beta\rfloor,
\]
there are
\begin{equation}\label{eq:number-of-equal-cap-palettes}
 L=(1-o(1))\frac{Q}{s}
\end{equation}
pairwise disjoint caps of size $s$, as $m\to\infty$.
\end{lemma}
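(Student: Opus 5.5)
We will build the palettes greedily, using Lemma~\ref{lem:zero-sum-cap-partition} together with the observation that any subset of a cap is again a cap. Fix $\beta<\beta'<\alpha$. By \eqref{eq:cap-capacity-asymptotic}, for large $m$ there is a cap $C\subseteq\F_3^m$ with $|C|=M(m)\ge Q^{\beta'}$. Lemma~\ref{lem:zero-sum-cap-partition} then partitions $\F_3^m$ into caps $C_1,\dots,C_J$ with
\[
 J\le\left\lceil\frac{Q}{M(m)}(\log Q+1)\right\rceil\le Q^{1-\beta'}(m\log 3+2).
\]
Next we cut each part $C_i$ into $\lfloor |C_i|/s\rfloor$ disjoint subsets of size exactly $s$ and discard the remainder, which has fewer than $s$ elements. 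Each piece is a cap, and pieces taken from different parts are disjoint because the $C_i$ are disjoint.

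Counting the pieces is the only step that needs care. The number of elements discarded is below $Js$, so the number $L$ of palettes satisfies
\[
 L\ge\frac{Q-Js}{s}=\frac{Q}{s}\Bigl(1-\frac{Js}{Q}\Bigr).
\]
We have $Js/Q\le Q^{\beta-\beta'}(m\log3+2)$. Since $Q=3^m$ and $\beta'>\beta$, the factor $Q^{\beta-\beta'}$ decays exponentially in $m$, while $m\log 3+2$ grows only linearly, so $Js/Q\to0$. Trivially $L\le Q/s$, so $L=(1-o(1))Q/s$, which is \eqref{eq:number-of-equal-cap-palettes}. We also need $s\ge1$ so that the pieces are nonempty, and this holds once $m$ is large.

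The main obstacle is wastage: parts $C_i$ smaller than $s$ are thrown away entirely. This is why we use caps of size $Q^{\beta'}$ with $\beta'$ strictly larger than $\beta$, rather than caps of size about $s$. The gap makes the total loss $Js$ a vanishing fraction of $Q$ even after paying the logarithmic covering factor $\log Q+1$. The strict inequality $\beta<\alpha$ in the hypothesis is exactly what leaves room for such a $\beta'$.
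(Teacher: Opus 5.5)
Your proposal is correct and follows the paper's proof: choose $\beta<\beta'<\alpha$, use Lemma~\ref{lem:zero-sum-cap-partition} to partition $\F_3^m$ into $J\le Q^{1-\beta'+o(1)}$ caps, cut each part into $s$-element pieces, and show that the discarded elements, fewer than $Js$, are $o(Q)$. Your explicit estimate $Js/Q\le Q^{\beta-\beta'}(m\log 3+2)$ is a concrete form of the paper's bound $Js\le Q^{1-\beta'+\beta+o(1)}$.
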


\begin{proof}
Choose $\beta'$ with $\beta<\beta'<\alpha$.  From the definition of
$\kappa_3$ and $\alpha$ we have
\[
 M(m)=Q^{\alpha+o(1)},
\]
so, for all sufficiently large $m$, $\F_3^m$ contains a cap of size at
least $Q^{\beta'}$.  Lemma~\ref{lem:zero-sum-cap-partition} partitions
$\F_3^m$ into
\[
 J\le Q^{1-\beta'+o(1)}
\]
caps.  Split every part into sets of size $s$, discarding the remainder of
each part.  Fewer than $Js$ elements are discarded, and
\[
 Js\le Q^{1-\beta'+\beta+o(1)}=o(Q).
\]
The remaining $Q-o(Q)$ elements therefore form the asserted family of
$L=(1-o(1))Q/s$ disjoint $s$-element caps.
\end{proof}

Let $R_s(3)$ be the least $N$ such that every $s$-coloring of
$E(K_N)$ contains a monochromatic triangle. We use the following
superexponential lower bound from
\cite[Chapter~9]{openai-ten-advances}, announced recently by OpenAI.

\begin{theorem}[Superexponential multicolor Ramsey bound]
\label{thm:multicolor-triangle-lower-bound}
There is an absolute constant $c>0$ such that, for every integer $s\ge2$,
\begin{equation}\label{eq:multicolor-triangle-lower-bound}
 R_s(3)\ge
 \left(\frac{c\,s^{1/3}}{\log s}\right)^s.
\end{equation}
Consequently, as $s\to\infty$, there is an $s$-coloring of some $K_t$
with no monochromatic triangle and
\begin{equation}\label{eq:ramsey-order-consequence}
 \log t\ge \left(\frac13-o(1)\right)s\log s.
\end{equation}
\end{theorem}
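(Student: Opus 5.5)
The first assertion, \eqref{eq:multicolor-triangle-lower-bound}, is the cited result of \cite[Chapter~9]{openai-ten-advances}. I will not reprove it. It serves as a black-box input, in the same way that Theorem~\ref{thm:limited-ratio} was used in Section~\ref{sec:checksum}. What remains is to derive the consequence \eqref{eq:ramsey-order-consequence} from it, which only requires unwinding the definition of $R_s(3)$ and doing some asymptotic bookkeeping.

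The plan is to set $t=R_s(3)-1$. By the definition of $R_s(3)$ as the least $N$ such that every $s$-coloring of $E(K_N)$ has a monochromatic triangle, some $s$-coloring of $E(K_t)$ has no monochromatic triangle. We need $t\ge 1$ for this to be meaningful. This holds because $R_s(3)\ge R_2(3)=6$, or directly because the right-hand side of \eqref{eq:multicolor-triangle-lower-bound} tends to infinity. The same coloring, with one vertex deleted, also shows monotonicity, so the choice $t=R_s(3)-1$ is the natural extremal one.

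For the bound, since $R_s(3)\ge 2$ we have $t\ge R_s(3)/2$. Hence
\[
 \log t\ \ge\ \log R_s(3)-\log 2\ \ge\ s\Bigl(\log c+\tfrac13\log s-\log\log s\Bigr)-\log 2 .
\]
Dividing by $s\log s$, the terms $\log c/\log s$, $\log\log s/\log s$ and $\log 2/(s\log s)$ all tend to $0$ as $s\to\infty$. This gives $\log t\ge(\tfrac13-o(1))\,s\log s$.

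The only point needing care is that $\log c$ may be negative and that $\log\log s$ is a genuine loss. Both are lower order compared with $\log s$, so they are absorbed into the $o(1)$ term. There is no real obstacle beyond accepting the cited superexponential bound itself.
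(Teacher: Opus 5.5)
Your proposal is correct and matches the paper. The paper also imports \eqref{eq:multicolor-triangle-lower-bound} from the cited Chapter~9 result without proof and treats \eqref{eq:ramsey-order-consequence} as the immediate consequence of taking $t=R_s(3)-1$; your bookkeeping ($t\ge R_s(3)/2$, then absorbing $\log c$, $\log\log s$ and $\log 2/s$ into the $o(1)$ term) carries out that unwinding correctly.
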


This result enables us to prove the zero-sum Ramsey estimate that drives the ternary
distance-three construction.

\begin{theorem}[Zero-sum Ramsey amplification]
\label{thm:zero-sum-amplification}
Let $Q=3^m$.  Then
\begin{equation}\label{eq:zero-sum-amplification}
 Z(\F_3^m)\ge Q^{(\alpha/3-o(1))Q}
 \qquad (m\to\infty).
\end{equation}
\end{theorem}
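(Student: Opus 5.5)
The plan is to combine Lemma~\ref{lem:equal-cap-palettes}, Theorem~\ref{thm:multicolor-triangle-lower-bound}, and Lemma~\ref{lem:zero-sum-palette}, and then optimise over the palette exponent $\beta$.

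\textbf{Step 1: choose the palettes.} Fix $0<\beta<\alpha$ and set $s=\lfloor Q^\beta\rfloor$. Lemma~\ref{lem:equal-cap-palettes} supplies
\[
L=(1-o(1))\frac{Q}{s}
\]
pairwise disjoint caps $P_1,\ldots,P_L\subseteq\F_3^m$, each of size exactly $s$.

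\textbf{Step 2: choose the Ramsey coloring.} Since $s\to\infty$ as $m\to\infty$, Theorem~\ref{thm:multicolor-triangle-lower-bound} gives an $s$-coloring of $E(K_t)$ with no monochromatic triangle and
\[
\log t\ge\left(\tfrac13-o(1)\right)s\log s .
\]

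\textbf{Step 3: amplify.} Lemma~\ref{lem:zero-sum-palette} then yields $Z(\F_3^m)\ge t^L$, so
\[
\log Z(\F_3^m)\ge L\log t\ge(1-o(1))\frac{Q}{s}\cdot\left(\tfrac13-o(1)\right)s\log s=\left(\tfrac13-o(1)\right)Q\log s .
\]
The factor $s$ cancels, so the palette size enters only through $\log s$.

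Next I estimate $\log s$. Because $s=\lfloor Q^\beta\rfloor$ and $Q^\beta\to\infty$, we have $\log s=(\beta-o(1))\log Q$. Substituting gives
\[
Z(\F_3^m)\ge Q^{(\beta/3-o(1))Q}.
\]

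\textbf{Step 4: let $\beta\uparrow\alpha$.} This is the one delicate point, and it is only a bookkeeping matter. For each fixed $\beta<\alpha$ the bound holds with an $o(1)$ term depending on $\beta$. To obtain the exponent $\alpha/3-o(1)$, I will take a sequence $\beta_k\uparrow\alpha$ and, by a standard diagonal argument, choose thresholds $m_k$ beyond which the error term for $\beta_k$ is at most $1/k$. Then I set $\beta=\beta_k$ for $m_k\le m<m_{k+1}$.

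Alternatively, I could let $\beta=\beta(m)\to\alpha$ slowly enough that the proof of Lemma~\ref{lem:equal-cap-palettes} still applies. That needs $\beta'-\beta\gg o(1)$, so that the discarded mass $Js=Q^{1-\beta'+\beta+o(1)}$ remains $o(Q)$. The diagonal argument avoids tracking this and is cleaner.

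I expect no real obstacle beyond this. The substantive inputs are the superexponential Ramsey bound, which makes $L\log t$ linear in $Q\log s$ rather than merely in $Q$, and the near-perfect packing of $\F_3^m$ by caps of size $Q^{\beta}$ with $\beta$ arbitrarily close to $\alpha$.
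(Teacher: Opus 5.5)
Your proposal is correct and follows the paper's proof step for step. You fix $\beta<\alpha$, take $s=\lfloor Q^\beta\rfloor$, combine the $L=(1-o(1))Q/s$ disjoint cap palettes with the superexponential Ramsey coloring via palette amplification to get $\log Z(\F_3^m)\ge(\beta/3-o(1))Q\log Q$, and then let $\beta\uparrow\alpha$; your diagonal argument in Step~4 is valid but more than needed, since the bound only asserts that for each $\varepsilon>0$ the exponent $\alpha/3-\varepsilon$ holds eventually, which follows directly by taking $\beta=\alpha-\varepsilon$.
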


\begin{proof}
Fix $0<\beta<\alpha$ and put $s=\lfloor Q^\beta\rfloor$.  By
Lemma~\ref{lem:equal-cap-palettes}, the group $\F_3^m$ contains
\[
 L=(1-o(1))\frac{Q}{s}
\]
pairwise disjoint caps of size $s$.  By
Theorem~\ref{thm:multicolor-triangle-lower-bound}, there is an $s$-coloring
of $K_t$ with no monochromatic triangle and
\[
 \log t\ge \left(\frac13-o(1)\right)s\log s.
\]
Lemma~\ref{lem:zero-sum-palette} gives $Z(\F_3^m)\ge t^L$, and hence
\begin{align*}
 \log Z(\F_3^m)
 &\ge L\log t\\
 &\ge (1-o(1))\frac{Q}{s}
       \left(\frac13-o(1)\right)s\log s\\
 &=\left(\frac{\beta}{3}-o(1)\right)Q\log Q.
\end{align*}
Thus $Z(\F_3^m)\ge Q^{(\beta/3-o(1))Q}$.  Since $\beta<\alpha$ is arbitrary,
\eqref{eq:zero-sum-amplification} follows.
\end{proof}
\begin{corollary}\label{cor:zero-sum-growth}
As $m\to\infty$,
$  \log Z(\mathbb F_3^m)=\Theta(m3^m).$

More precisely, writing $Q=3^m$, we have
$  Q^{(\alpha/3-o(1))Q}
  \le Z(\mathbb F_3^m)
  \le Q^{(1+o(1))Q}.$
\end{corollary}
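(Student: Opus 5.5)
The lower bound $Z(\F_3^m)\ge Q^{(\alpha/3-o(1))Q}$ is exactly Theorem~\ref{thm:zero-sum-amplification}, and $\alpha>0$ because caps of size $2^m$ exist (take $\{0,1\}^m$). Hence $\log Z(\F_3^m)\ge(\alpha/3-o(1))Q\log Q=\Omega(m3^m)$. What remains is the upper bound $Z(\F_3^m)\le Q^{(1+o(1))Q}$, which gives $\log Z=O(m3^m)$. I will derive it from the classical multicolor Ramsey upper bound for triangles.

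The key observation is that a zero-sum-free labeling is in particular a $Q$-coloring of $E(K_N)$ with no monochromatic triangle. Indeed, if $\gamma_{ij}=\gamma_{ik}=\gamma_{jk}=g$ for some $g\in\F_3^m$, then in characteristic three
\[
 \gamma_{ij}+\gamma_{ik}+\gamma_{jk}=3g=0,
\]
which violates \eqref{eq:zero-sum-label}. So every labeling witnessing $Z(G)\ge N$, with $|G|=Q$ colors, shows $N<R_Q(3)$, and therefore $Z(\F_3^m)\le R_Q(3)-1$.

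Next I invoke the standard neighborhood recursion: fixing a vertex, one of its $Q$ color classes of neighbours has size at least $(N-1)/Q$, and that class contains no edge of its own color. This gives
\[
 R_s(3)\le s\bigl(R_{s-1}(3)-1\bigr)+2,
\]
and hence $R_s(3)\le \lfloor e\,s!\rfloor+1$. By Stirling's formula, $\log R_Q(3)\le Q\log Q-Q+O(\log Q)=(1+o(1))Q\log Q$, so $Z(\F_3^m)\le Q^{(1+o(1))Q}$. Combining the two bounds gives $\log Z(\F_3^m)=\Theta(Q\log Q)=\Theta(m3^m)$.

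None of these steps is a real obstacle. The only points that need care are:
\begin{itemize}
\item the characteristic-three identity $3g=0$, which is what turns a zero-sum-free labeling into a Ramsey coloring;
\item the elementary bookkeeping in the recursion and its Stirling estimate;
\item checking $\alpha>0$, so that the lower bound has the correct order $\Omega(m3^m)$.
\end{itemize}
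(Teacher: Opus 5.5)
Your proposal is correct and follows the paper's proof: the lower bound is Theorem~\ref{thm:zero-sum-amplification}, and the upper bound comes from noting that a monochromatic triangle has label sum $3g=0$, so $Z(\F_3^m)<R_Q(3)=O(Q!)$. The rest is added detail the paper leaves implicit: the cap $\{0,1\}^m$ gives $\alpha\ge\log_3 2>0$, and you spell out the neighbourhood recursion and the Stirling estimate.
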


\begin{proof}
The lower bound is Theorem~\ref{thm:zero-sum-amplification}.
A labeling by $\mathbb F_3^m$ can be regarded as an
edge-coloring with $Q$ colors. Every monochromatic
triangle has label sum zero, so
$Z(\mathbb F_3^m)<R_Q(3)$.
The classical factorial upper bound
$R_Q(3)=O(Q!)$ gives the required upper estimate.
\end{proof}
\Needspace{8\baselineskip}
\begin{theorem}[Zero-sum Ramsey input]\label{thm:zero-sum-input}
Let $\kappa_3$ and $\alpha$ be defined by \eqref{eq:cap-capacity}.  For every
sufficiently large $n$ there is an elementary abelian $3$-group $G$ with
$n\le Z(G)$ and
\[
 |G|\le\left(\frac9\alpha+o(1)\right)
              \frac{\log n}{\log\log n}.
\]
Along an infinite sequence of values of $n$, one may replace $9/\alpha$ by
$3/\alpha$.
\end{theorem}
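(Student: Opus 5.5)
We will derive this statement directly from Theorem~\ref{thm:zero-sum-amplification} by choosing $m$ as a function of $n$. Write $Q=3^m$ and $f(m)=\log Z(\F_3^m)$. Theorem~\ref{thm:zero-sum-amplification} gives
\[
 f(m)\ge(\alpha/3-o(1))\,Q\log Q .
\]
We also use that $Z$ is monotone under inclusion of groups: a labeling by a subgroup is also a labeling by the larger group. Given $n$, take $m$ to be the least integer with $(\alpha/3-\varepsilon)Q\log Q\ge\log n$, for a fixed small $\varepsilon>0$. Then $Z(\F_3^m)\ge n$ for large $n$, and we set $G=\F_3^m$.

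To bound $|G|=Q$, use the minimality of $m$. The previous value $Q'=Q/3$ fails the inequality, so
\[
 (\alpha/3-\varepsilon)\,Q'\log Q'<\log n .
\]
Since $Q'\log Q'=\Theta(\log n)$, we get $\log Q'=(1+o(1))\log\log n$. Hence
\[
 Q'\le\frac{(3/\alpha+\varepsilon')\log n}{\log\log n},
 \qquad
 Q=3Q'\le\left(\frac9\alpha+\varepsilon''\right)\frac{\log n}{\log\log n}.
\]
Letting $\varepsilon\to0$ slowly gives the $(9/\alpha+o(1))$ bound. The factor $3$ lost here, relative to the natural $3/\alpha$, comes solely from the granularity of $|G|$, which must be a power of $3$.

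For the infinite sequence, choose $n$ to be exactly of the form $n=\lfloor\exp\bigl((\alpha/3-\varepsilon_m)Q\log Q\bigr)\rfloor$ with $\varepsilon_m\to0$ chosen so that $Z(\F_3^m)\ge n$, as permitted by Theorem~\ref{thm:zero-sum-amplification}. Inverting as above gives
\[
 Q=(3/\alpha+o(1))\,\frac{\log n}{\log\log n},
\]
because $\log Q\sim\log\log n$ along this sequence. These $n$ grow with $m$, so this is an infinite sequence.

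The only point needing care is making the $o(1)$ terms uniform. The lower bound in Theorem~\ref{thm:zero-sum-amplification} holds as $m\to\infty$, so we need a function $\varepsilon(m)\to0$ with $f(m)\ge(\alpha/3-\varepsilon(m))Q\log Q$ for all large $m$. We then run the inversion with $\varepsilon(m)$ in place of $\varepsilon$, checking that $Q\log Q\sim x$ implies $Q\sim x/\log x$; this is routine. We also need $n$ large enough that $m$ exceeds the threshold where the asymptotic bound applies.
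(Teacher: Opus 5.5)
Your proposal is correct and follows the paper's proof essentially step for step. Your threshold $(\alpha/3-\varepsilon)Q\log Q\ge\log n$ is the paper's condition $n\le Q^{\eta Q}$ with $\eta=\alpha/3-\varepsilon$. The bound on $|G|$ comes from minimality applied to $Q/3$, which is exactly where the factor $3$ is lost. The subsequence is obtained, as in the paper, from $n_m\le Z(\F_3^m)$ with $\log n_m=(\alpha/3-o(1))Q\log Q$. The monotonicity of $Z$ under group inclusion that you invoke is never used, since you take $G=\F_3^m$ directly.
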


\begin{proof}
Fix $0<\eta<\alpha/3$.  Theorem~\ref{thm:zero-sum-amplification} implies that
for every sufficiently large power $Q=3^m$,
\begin{equation}\label{eq:fixed-eta-zero-sum-bound}
 Z(\F_3^m)\ge Q^{\eta Q}.
\end{equation}
Given a sufficiently large $n$, choose the least power of three $Q$ for
which
\[
 n\le Q^{\eta Q}.
\]
Then \eqref{eq:fixed-eta-zero-sum-bound} supplies the required labeling in
$G=\F_3^m$.  By the minimality of $Q$, the preceding power $Q/3$ satisfies
\[
 n>\left(\frac Q3\right)^{\eta Q/3},
\]
and therefore
\begin{equation}\label{eq:zero-sum-inversion}
 \log n>\frac{\eta Q}{3}\log\frac Q3.
\end{equation}
Together with $\log n\le\eta Q\log Q$, this shows that
\[
 Q=\Theta\!\left(\frac{\log n}{\log\log n}\right)
 \quad\text{and}\quad
 \log Q=(1+o(1))\log\log n.
\]
Substitution in \eqref{eq:zero-sum-inversion} yields
\[
 |G|=Q\le
 \left(\frac3\eta+o(1)\right)
 \frac{\log n}{\log\log n}.
\]
Letting $\eta$ tend to $\alpha/3$ proves the first assertion.

For the subsequence assertion, let $Q=3^m$ tend to infinity.  From
\eqref{eq:zero-sum-amplification} we may choose integers $n_m\le
Z(\F_3^m)$ satisfying
\[
 \log n_m=\left(\frac\alpha3-o(1)\right)Q\log Q.
\]
It follows that
\[
 Q=\left(\frac3\alpha+o(1)\right)
 \frac{\log n_m}{\log\log n_m},
\]
which proves the sharper estimate along the infinite sequence
$(n_m)_{m\ge1}$.
\end{proof}

The available numerical value of $\kappa_3$ is governed by a succession of explicit
constructions.  Edel's generalized product caps yielded the longstanding
bound $\kappa_3\ge 2.2174$ \cite{edel}.  Tyrrell obtained the first
improvement in nearly two decades, reaching $2.218$
\cite{tyrrell}; the program-search construction of Romera-Paredes et al.\
subsequently reached $2.2202$ \cite{funsearch}.  The construction of Zhai et al.~\cite{x-evolve} gives the explicit
lower bound
\[
                  \kappa_3\ge 2.2203
                  \qquad\text{and hence}\qquad
                  \alpha\ge 0.72604.
\]
Consequently the two constants in
Theorem~\ref{thm:ternary-r3} may presently be taken to be at least
\[
        \frac{\alpha}{27}\ge 0.02689,
        \qquad
        \frac{\alpha}{9}\ge 0.08067,
\]
respectively.

The constants in Theorem~\ref{thm:ternary-r3} now follow from an explicit
coloring. We include the reduction because it exposes the role of the two
local types.

\begin{proof}[Proof of Theorem~\ref{thm:ternary-r3}]
Choose $G$ and a labeling $\gamma_{ij}$ satisfying
\eqref{eq:zero-sum-label}.  For $x\in\F_3^n$ define
\[
 \sigma(x)=\sum_{i=1}^n x_i\in\F_3,
 \qquad
 Q(x)=\sum_{1\le i<j\le n}x_ix_j\gamma_{ij}\in G,
\]
and color $x$ by $(\sigma(x),Q(x))$.  This gives at most $3|G|$ color
classes.

Suppose first that $x,y,z$ form a triangle of type $(a,d)=(1,1)$.
Adding the same field element to all three entries in any coordinate
shifts their $\sigma$-values equally, so it preserves whether these
values coincide. We may therefore normalize every inactive coordinate
to zero and the repeated symbol in each exceptional coordinate to zero.
The three exceptional coordinates then contribute
$(e_x,0,0)$, $(0,e_y,0)$, and $(0,0,e_z)$, respectively, with
$e_x,e_y,e_z\ne0$. At the unique all-distinct coordinate, the entries
$(t_x,t_y,t_z)$ form a permutation of $\F_3$. Hence
\[
 \sigma(x)=e_x+t_x,\qquad
 \sigma(y)=e_y+t_y,\qquad
 \sigma(z)=e_z+t_z.
\]
If these three values were equal to $\tau$, then
$e_x=\tau-t_x$, $e_y=\tau-t_y$, and $e_z=\tau-t_z$.
Since one of $t_x,t_y,t_z$ equals $\tau$, one of
$e_x,e_y,e_z$ would be zero, a contradiction.
Thus a mixed triangle is not monochromatic.

It remains to consider a type-$(0,3)$ triangle.  Write it as
$x,x+u,x-u$, with $u$ supported on three coordinates $i,j,k$.  If the
three vertices have the same $\sigma$-value, then $\sum_i u_i=0$.  Since the
three nonzero entries of $u$ lie in $\{1,-1\}$, they must all be equal.
The quadratic second difference is then
\[
 Q(x+u)+Q(x-u)-2Q(x)
   =2\sum_{a<b}u_au_b\gamma_{ab}
   =-\bigl(\gamma_{ij}+\gamma_{ik}+\gamma_{jk}\bigr),
\]
where the last equality is in characteristic three. This is nonzero by
\eqref{eq:zero-sum-label}.  Hence the three $Q$-values cannot all agree, and
the coloring is triangle-free.

Its largest class has size at least $3^n/(3|G|)$.  The first estimate in
Theorem~\ref{thm:zero-sum-input} gives
\[
 \frac{3^n}{3|G|}
 \ge
 \left(\frac{\alpha}{27}-o(1)\right)
 \frac{\log\log n}{\log n}\,3^n,
\]
and the improved estimate along the distinguished subsequence gives the
constant $\alpha/9$.
\end{proof}

The construction uses exactly the algebra demanded by the local geometry:
the linear coordinate excludes the mixed type and the quadratic coordinate
excludes affine lines. In contrast, the proper coloring constructed in
Section~\ref{sec:checksum} separates
every distance-three pair and is therefore uniform in $q$ but much less
dense when $q=3$.

\section{Concluding remarks}\label{sec:conc}

At odd distances, binary restriction supplies no nontrivial upper
bound, since every binary odd-distance graph is bipartite. Over
$\F_3$, the type-$a=0$ triangles are affine lines $x,x+u,x-u$ with
$\operatorname{wt}(u)=r$. The essential difficulty is to control
these together with all types $a\ge1$. At distance three there is
only one additional type, and a single linear statistic removes it.
For larger odd distances, a replacement may require higher-order
labels that encode several exceptional-coordinate patterns at once.

We conclude with three open questions. First, can the lower bounds of
Theorem~\ref{thm:sparse-hypergraph} and
Proposition~\ref{prop:prime-power-bch} be improved to match the
binary-subcube upper bound at every fixed even distance, as proposed
in Conjecture~\ref{conj:even}? Second, can the constant-density upper
bound of Proposition~\ref{prop:distance-three-counting-upper} be
replaced by a vanishing-density bound? In particular, is
$T_3(n,3)=o(3^n)$, improving the current estimate
\[
 T_3(n,3)\le\left(\frac5{27}+O(n^{-1})\right)3^n?
\]
Third, can the triangle-free condition be used to improve on the
BCH and sparse hypergraph bounds at larger odd distances?
The ternary distance-three construction suggests that controlling
triangles directly can be substantially more effective than
excluding every edge.

There is also a quantitative connection between the upper bounds of $T_3(n,3)$ and the multicolor
triangle Ramsey numbers $R_k(3)$ defined in
Section~\ref{sec:ternary-distance-three}.
Suppose that
$C\subseteq\mathbb F_3^m$ is a cap of size at least $k$. Thus 
$ c_1+c_2+c_3=0,\qquad c_1,c_2,c_3\in C,$
implies $c_1=c_2=c_3$. Replacing the colors of a $k$-edge-coloring
without monochromatic triangles by distinct elements of $C$
therefore produces an edge-labeling without zero-sum triangles.
Consequently,
\[
 R_k(3)-1\le Z(\mathbb F_3^m).
\]

The quadratic construction gives the complementary implication
\[
 n\le Z(\mathbb F_3^m)
 \quad\Longrightarrow\quad
 T_3(n,3)\ge 3^{n-m-1}.
\]
Indeed, the construction partitions the hyperplane
$\{x\in\mathbb F_3^n:\sum_i x_i=0\}$ into at most $3^m$
triangle-free fibers, one of which has size at least
$3^{n-m-1}$. Combining these observations yields
\begin{equation}\label{eq:hamming-to-ramsey}
 T_3(n,3)<3^{n-m-1}
 \quad\Longrightarrow\quad
 R_k(3)\le n,
\end{equation}
whenever $\mathbb F_3^m$ contains a cap of size at least $k$.
Thus sufficiently strong upper bounds for the distance-three
problem would imply upper bounds for classical multicolor
triangle Ramsey numbers.

The rate of any decay in $T_3(n,3)/3^n$ is therefore relevant
to Ramsey theory, beyond the qualitative question of whether
this density tends to zero. The implications above transfer
upper bounds; a positive-density lower bound for $T_3(n,3)$
alone would not yield a converse Ramsey estimate.

The problem of forbidding larger cliques has already been
considered in this setting. Balogh, Chen, and
Li~\cite[Question~1.2 and Theorem~1.4]{balogh-chen-li}
determined the order of $\alpha_s(H_2(n,r))$ for fixed $s$
and fixed even $r$. Mukkamala and
Ravi~\cite[Section~4, Problem~3]{mukkamala-ravi}
explicitly posed the corresponding problem over prime-power
alphabets. The constructions above settle the case
$q=r=3$ and $s=4$, as $
 \alpha_4(H_3(n,3))=3^{n-1}$ for $n\ge4$. Indeed, the proof of Theorem~\ref{thm:ternary-r3} shows that each
fiber of the linear statistic $\sum_i x_i$ contains no triangle
of mixed type. Every triangle in such a fiber is therefore an
affine line. A $K_4$ is impossible, since its triangles through
a fixed edge would have to lie on the same affine line, which
contains only three points. Each fiber thus gives a $K_4$-free
set of size $3^{n-1}$.
Conversely, the cosets of the two-dimensional subspace displayed
in the proof of Proposition~\ref{prop:distance-three-counting-upper}
partition $\F_3^n$ into $9$-cliques. A $K_4$-free set meets each
coset in at most three points, proving the matching upper bound.
In particular, $\alpha_s(H_3(n,3))\ge3^{n-1}$ for every $s\ge4$,
so the question of vanishing density at ternary distance three
is specific to the triangle-free problem.

\noindent \textbf{AI declaration.} AI-assisted tools were used in editing and preparing the manuscript: during the workshop, the authors used  ChatGPT 5.6 Sol, which assisted them in obtaining the main results of the manuscript by building on their ideas. Later, they used  ChatGPT 6 Astra for proofreading.
The authors reviewed the resulting text and take full responsibility for the mathematical statements, proofs, and references.

 \noindent \textbf{Acknowledgement.} The problem studied in this paper was posed at the Excellence
Workshop, held in Szigliget, Hungary, on 4--9 August 2026.
The authors thank the organizers for providing the opportunity
for these discussions.

\end{document}